\newtheorem{Theorem}{Theorem}[section]
\newtheorem{Lemma}[Theorem]{Lemma}
\newtheorem{Corollary}[Theorem]{Corollary}
\newtheorem{Remarks}[Theorem]{Remarks}
\newtheorem{Example}[Theorem]{Example}
\newtheorem{Definition}[Theorem]{Definition}
\numberwithin{equation}{section}
\def\QQ{{\mathbb Q}} \def\NN{{\mathbb N}} \def\ZZ{{\mathbb Z}}
 \def\CC{{\mathbb C}}
\def\frk{\mathfrak}  
  \def\mm{{\frk m}}
\def\R{{\mathcal R}}  
\def\opn#1#2{\def#1{\operatorname{#2}}} 
\opn\chara{char} \opn\length{\ell} \opn\pd{pd} \opn\rk{rk}
\opn\projdim{proj\,dim} \opn\injdim{inj\,dim}
\opn\rank{rank} \opn\depth{depth} \opn\grade{grade} 
\opn\hei{ht} \opn\embdim{emb\,dim}\opn\codim{codim}
\opn\Tr{Tr} \opn\bigrank{big\,rank}
\opn\superheight{superheight} \opn\lcm{lcm}
\opn\rdim{rdim} \opn\trdeg{tr\,deg} \opn\reg{reg}  \opn\lreg{lreg} 
\opn\ini{in} \opn\lpd{lpd} \opn\size{size} \opn{\mult}{mult}
\opn\div{div} \opn\Div{Div} \opn\cl{cl} \opn\Cl{Cl}
\opn\Spec{Spec} \opn\Supp{Supp} \opn\supp{supp} 
\opn\Sing{Sing} \opn\Ass{Ass} \opn\Min{Min}
\opn\Proj{Proj} \opn{\Max}{Max} \opn{\Assh}{Assh}
\opn\Ann{Ann} \opn\Rad{Rad} \opn\Soc{Soc}
\opn\Syz{Syz} \opn\Im{Im} \opn\Ker{Ker} \opn\Coker{Coker}
\opn\Am{Am} \opn\Hom{Hom} \opn\tor{Tor} \opn\Ext{Ext}
\opn\End{End} \opn\Aut{Aut} \opn\id{id}
\opn\nat{nat} \opn\pff{pf} 
\opn\Pf{Pf} \opn\GL{GL} \opn\SL{SL} \opn\mod{mod} \opn\ord{ord}
\opn\Gin{Gin} \opn\Hilb{Hilb}
\opn\adeg{adeg} \opn\std{std}\opn\ip{infpt}
\opn\Pol{Pol} \opn\sat{sat} \opn\Var{Var}
\opn\aff{aff} \opn\con{conv} \opn\relint{relint} \opn\st{st}
\opn\lk{lk} \opn\cn{cn} \opn\core{core} \opn\vol{vol}
\opn\link{link} \opn\star{star}
\opn\gr{gr}
\opn\inn{in}
\title[]{Hilbert-Kunz function and Hilbert-Kunz multiplicity \\[2mm] of some ideals of the Rees algebra}
\thanks{The first author is supported by a UGC fellowship, Govt. of India}
\thanks{{\it Key words and phrases}: Hilbert-Kunz function, Hilbert-Kunz multiplicity, Rees algebra, generalized Hilbert-Kunz function}
\thanks{{\it 2010 AMS Mathematics Subject Classification:} 13A30, 13A35, 13D40.}
\author[]{Kriti Goel}
\address{Indian Institute of Technology Bombay, Mumbai, India 400076}
\email{kritigoel.maths@gmail.com}
\author[]{Mitra Koley}
\address{School of Mathematics, Tata Institute of Fundamental Research, Homi Bhabha Road, Mumbai, India 400005}
\email{mitrak@math.tifr.res.in}
\author[]{J. K. Verma}
\address{Indian Institute of Technology Bombay, Mumbai, India 400076}
\email{jkv@iitb.ac.in}
\begin{document}

\begin{abstract}
	We prove that the Hilbert-Kunz function of the ideal $(I,It)$ of the Rees algebra $\R(I)$, where $I$ is an $\mm$-primary ideal of a $1$-dimensional local ring $(R,\mm)$, is a quasi-polynomial in $e$, for large $e.$ For $s \in \NN$, we calculate the Hilbert-Samuel function of the $R$-module $I^{[s]}$ and obtain an explicit description of the generalized Hilbert-Kunz function of the ideal $(I,It)\R(I)$ when $I$ is a parameter ideal in a Cohen-Macaulay local ring of dimension $d \geq 2$, proving that the generalized Hilbert-Kunz function is a piecewise polynomial in this case. 
\end{abstract}

\maketitle

\section{Introduction}

Let $(R,\mm)$ be a $d$-dimensional Noetherian local ring  of positive prime characteristic $p$ and let  $I$ be an $\mm$-primary ideal. The $q^{th}$-Frobenius power of $I$ is the ideal $I^{[q]}=(x^q\mid x\in I)$ where $q=p^e$ for $e\in \NN.$ The function $e \mapsto \ell_R(R/I^{[p^e]})$ is called the {\it Hilbert-Kunz function} of $R$ with respect to $I$ and was first considered by E. Kunz in \cite{kunz69}. In \cite{monsky}, P. Monsky showed that this function is of the form 
\[ \ell_R(R/I^{[q]}) = e_{HK}(I,R) q^{d} + O(q^{d-1}), \]
where $e_{HK}(I,R)$ is a positive real number called the {\it Hilbert–Kunz multiplicity} of $R$ with respect to $I.$ We write $e_{HK}(R) := e_{HK}(\mm,R)$ and $e_{HK}(I) := e_{HK}(I,R).$ Besides the mysterious leading coefficient, the behavior of the Hilbert–Kunz function is also unpredictable. Monsky proved that in the case of 1-dimensional rings, the Hilbert-Kunz function $\ell_R(R/I^{[p^e]}) = e_{HK}(I,R)q+\delta_e$, where $\delta_e$ is a periodic function of $e$, for large $e.$ Precisely, take $R=k[[X,Y]]/(X^5-Y^5)$, where $k$ is a field of prime characteristic $p\equiv \pm 2 (\mod 5).$ Let $\mm$ be the maximal ideal of $R.$  Monsky \cite{monsky} showed that for large $e$, $\ell_R(R/\mm^{[q]}) = 5q+\alpha_{\mm}(e)$, where $\alpha_{\mm}(e)=-4$ when $e$ is even and $\alpha_{\mm}(e)=-6$ when $e$ is odd. In \cite{hanMonsky}, the authors determined the Hilbert-Kunz function of the maximal ideal of the ring $R=\ZZ/p[[x_1,\ldots,x_s]]/(x_1^{d_1} + \cdots + x_s^{d_s})$, where $d_i$ are positive integers, $d=\dim R,$ and $s=d+1 \geq 3.$ They proved that if $\ell_R(R/\mm^{[q]}) = e_{HK}(R)q^d+\delta_e$, then $e_{HK}(R)$ is rational and $\delta_e$ is an eventually periodic function of $e$ whenever $s=3$ or $p=2.$ But when $s=4$ and $p=5$ with $d_1=\cdots=d_s=4$, then $\ell_R(R/\mm^{[5^e]}) = (168/61) \ 5^{3e} - (107/61) \ 3^e.$ In \cite{brenner2007}, H. Brenner proved the following result

\begin{Theorem}
	Let $k$ denote the algebraic closure of a finite field of characteristic $p.$ Let $R$ denote a normal two-dimensional standard-graded $k$-domain and let $I$ denote a homogeneous $R_+$-primary ideal. Then the Hilbert-Kunz function of $I$ has the form $\ell_R(R/I^{[q]}) = e_{HK}(I,R) q^2 + \delta_e$, where $e_{HK}(I,R)$ is a rational number and $\delta_e$ is an
	eventually periodic function.
\end{Theorem}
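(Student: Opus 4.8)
\emph{Proof sketch.} The plan is to transfer the problem to the smooth projective curve $Y := \Proj R$ over $k$ (smooth since $R$ is normal of dimension two) and to compute the Hilbert--Kunz function from the cohomology of the twisted Frobenius pullbacks of the syzygy bundle of $I$. Let $\mathcal{O}_Y(1)$ be the ample line bundle induced by the grading, $D := \deg \mathcal{O}_Y(1)$, and $g$ the genus of $Y$; since $R$ is normal, $R_n$ and $H^0(Y,\mathcal{O}_Y(n))$ differ only by a total finite-length discrepancy. Write $I = (f_1,\dots,f_n)$ with $f_i$ homogeneous of degree $d_i$ and let $\mathcal{S} = \Syz(f_1,\dots,f_n) = \Ker\bigl(\bigoplus_i \mathcal{O}_Y(-d_i) \to \mathcal{O}_Y\bigr)$, a bundle of rank $n-1$ and degree $-D\sum_i d_i$; since $f_i^q$ is a $q$-th power, $\Syz(f_1^q,\dots,f_n^q)\cong F^{e*}\mathcal{S}$. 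Taking cohomology of the sheaf sequences $0 \to (F^{e*}\mathcal{S})(m) \to \bigoplus_i \mathcal{O}_Y(m-qd_i) \to \mathcal{O}_Y(m) \to 0$ and summing over $m \in \ZZ$ (a finite sum), I would obtain
\[
\ell_R(R/I^{[q]}) \;=\; P(q) \;+\; \sum_{m \in \ZZ} h^0\!\bigl(Y,(F^{e*}\mathcal{S})(m)\bigr) \;+\; E_e,
\]
with $P(q)$ a fixed rational polynomial of degree $\le 2$ coming from Riemann--Roch for the line bundles $\mathcal{O}_Y(m-qd_i)$ and $\mathcal{O}_Y(m)$, and $E_e$ bounded independently of $e$.

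\textbf{Harder--Narasimhan filtration.} Let $0 = \mathcal{S}_0 \subsetneq \cdots \subsetneq \mathcal{S}_t = \mathcal{S}$ be the HN filtration, with semistable quotients $Q_j$ of slopes $\mu_1 > \cdots > \mu_t$; each $\mu_j = \deg Q_j/\rk Q_j$ is automatically rational. Frobenius pullback can destroy semistability, but by Langer's theorem there is a fixed $e_0$ with $F^{e_0*}Q_j$ strongly semistable for all $j$; hence for $e \ge e_0$ the HN filtration of $F^{e*}\mathcal{S}$ is $F^{e*}\mathcal{S}_\bullet$, with strongly semistable quotients $F^{e*}Q_j$ of slope $q\mu_j$. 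Since a semistable bundle of negative slope has $h^0 = 0$ and (by Serre duality) one of slope $> 2g-2$ has $h^1 = 0$, applying this to the twists $(F^{e*}Q_j)(m)$, strongly semistable of slope $q\mu_j + mD$, and running the long exact sequences of the filtration, one sees that the $m$-th summand of $\ell_R(R/I^{[q]})$ is given by an exact Riemann--Roch expression unless $m$ lies in one of finitely many windows $W_{j,e}$, each an interval of bounded length near $m \approx -q\mu_j/D$. The Riemann--Roch contributions from outside $\bigcup_j W_{j,e}$ sum to $e_{HK}(I,R)\,q^2$, where $e_{HK}(I,R)$ is an explicit quadratic expression in the $\mu_j/D$, the $\rk Q_j$, and the $d_i$, hence rational.

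\textbf{Periodicity of the defect.} It remains to show the total contribution of the windows is eventually periodic in $e$; then $\delta_e := \ell_R(R/I^{[q]}) - e_{HK}(I,R)q^2$ is eventually periodic. Two points. (i) The integers in $W_{j,e}$ form an eventually periodic set in $e$: $W_{j,e}$ has fixed length and endpoints $\approx -p^e\mu_j/D$, and writing $\mu_j/D = a_j/N_j$ in lowest terms, the fractional part of $p^e\mu_j/D$ depends only on $p^e \bmod N_j$, which is eventually periodic in $e$. (ii) The numbers $h^0\bigl((F^{e*}Q_j)(m)\bigr)$ and $h^1\bigl((F^{e*}Q_j)(m)\bigr)$, for $m \in W_{j,e}$, are eventually periodic in $e$. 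For (ii) one twists --- and, when $\mu_j \notin D\ZZ$, pulls back along a suitable finite cover of $Y$ to make the slope integral --- to reduce to computing $h^0$ of the Frobenius pullbacks $F^{e*}T$ of a \emph{fixed} strongly semistable degree-zero bundle $T$ over $\overline{\FF_p}$: by the theorem of Lange--Stuhler such a $T$ is trivialized by a finite \'etale cover and so corresponds to a representation of $\pi_1^{\mathrm{et}}(Y)$ with finite image in some $\GL_r(\overline{\FF_p})$, on which Frobenius pullback acts with finite orbits; hence $F^{e*}T$, and with it $h^0(F^{e*}T)$, is eventually periodic in $e$. Combining (i) and (ii) over the finitely many $j$ completes the argument.

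\textbf{Main obstacle.} The crux is (ii): controlling the cohomology of Frobenius pullbacks of strongly semistable degree-zero bundles over $\overline{\FF_p}$. This relies on the arithmetic/\'etale structure of such bundles (Lange--Stuhler, together with the finiteness of $\GL_r(\overline{\FF_p})$-representations of $\pi_1^{\mathrm{et}}(Y)$), on Langer's theorem as a black box for the uniform control of the HN filtration under Frobenius, and on some care in handling non-integral HN slopes via ascent to a cover of $Y$ and descent. Granting these, the cohomological dictionary and the Riemann--Roch bookkeeping are routine.
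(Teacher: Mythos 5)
This theorem is not proved in the paper at all: it is quoted verbatim from Brenner \cite{brenner2007} as background for the introduction, so there is no in-paper argument to compare against. Your sketch is, in substance, the strategy of Brenner's original proof (with the rationality of $e_{HK}$ also obtained independently by Trivedi): pass to the smooth curve $Y=\Proj R$, encode $\ell_R(R/I^{[q]})$ via the twists of the Frobenius pullbacks of the syzygy bundle, use the Harder--Narasimhan filtration together with Langer's theorem to make the quotients strongly semistable after a bounded Frobenius power, extract the $q^2$-term by Riemann--Roch (whence rationality of $e_{HK}$), and reduce the defect to the cohomology of Frobenius pullbacks of strongly semistable degree-zero bundles, where the hypothesis $k=\overline{\FF}_p$ is used in an essential way. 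That is the right architecture, and your point (i) on the eventual periodicity of the windows is fine.

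The one place where your logic is inverted is step (ii). Lange--Stuhler states that a bundle is trivialized by a finite \'etale cover \emph{if and only if} it is Frobenius-periodic, i.e.\ $F^{s*}T\cong T$ for some $s\ge 1$; so you cannot invoke \'etale trivializability of your strongly semistable degree-zero bundle $T$ as an input --- Frobenius-periodicity is essentially the conclusion you are trying to reach. The standard repair (and what Brenner actually does) is: descend $Y$, $I$, and $T$ to a finite field $\FF_{p^r}$; all the pullbacks $F^{e*}T$ are semistable of the same rank and degree, hence lie in a bounded family, and over a fixed finite field there are only finitely many such bundles up to isomorphism; therefore the sequence $e\mapsto F^{e*}T$ is eventually periodic up to isomorphism, which gives the eventual periodicity of $h^0$ and $h^1$ directly (and \'etale trivializability then follows as a corollary via Lange--Stuhler, rather than feeding into the argument). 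With that reordering, and keeping track that the descent to a finite field is legitimate because $R$ and $I$ are finitely presented over the prime field, your sketch matches the known proof.
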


 There are numerous papers in which  Hilbert-Kunz function and multiplicity have been calculated for specific local and graded rings. We mention a few of them. K. Pardue \cite{p}, P. Monsky \cite{m2} and R.-O. Buchweitz and Q. Chen \cite{bc} computed the Hilbert-Kunz multiplicity for  cubic curves. N. Fakhruddin and V. Trivedi  \cite{tr1} found explicit formulas for the Hilbert-Kunz functions and Hilbert-Kunz multiplicities of certain projective embeddings of flag varieties and elliptic curves over algebraically closed fields of positive characteristics. Explicit computation of the Hilbert-Kunz function and multiplicity are also done in \cite{tr2}.
 In  \cite{tr3}, a family of one-dimensional reduced Cohen-Macaulay local rings are constructed which have same Hilbert-Kunz multiplicity but have infinitely many Hilbert-Kunz functions.

We recall a few definitions. Let $(R,\mm)$ be a Noetherian local ring of dimension $d$ and $I$ be an $\mm$-primary ideal of $R$. The {\it Hilbert-Samuel function} $H_I(n)$ of $R$ with respect to $I$ is defined as $H_I(n) = \ell_R(R/I^n).$ It is known that $H_I(n)$ is a polynomial function of $n$ of degree $d$, for large $n.$ In particular, there exists a polynomial $P_I(x) \in \QQ[x]$ such that $H_I(n) = P_I(n)$ for all large $n.$ Write 
\[ P_I(x) = e_0(I) \binom{x+d-1}{d} - e_1(I) \binom{x+d-2}{d-1} + \cdots + (-1)^d e_d(I), \]
where $e_i(I)$ for $i = 0, 1, \ldots, d$ are integers, called the {\it Hilbert coefficients of $I.$} The leading coefficient $e_0(I)$ is called the \emph{multiplicity} of $I$ and $e_1(I)$ is called the \emph{Chern number} of $I.$ We write $e_0(I) := e_0(I,R)$ and $e_0(R) := e_0(\mm,R).$ The {\it postulation number of $I$} is defined as 
\[ n(I) = \max\{n \mid H_I(n) \neq P_I(n)\}.\] 
The notion of reduction of an ideal was introduced by D. G. Northcott and D. Rees. Let $J \subseteq I$ be ideals of $R.$ If $JI^n = I^{n+1}$ for some $n$, then $J$ is called a {\it reduction of $I.$} The reduction number $r_J(I)$ of $I$ with respect to $J$ is the smallest $n$ such that $JI^n = I^{n+1}.$ The {\it reduction number} of $I$ is defined as
\[ r(I) = \min \{ r_J(I) \mid J \text{ is a minimal reduction of } I\}. \] 
Moreover, if $R/\mm$ is infinite, then minimal reduction of $I$ exists. 

Let $\R(I) = \bigoplus_{n \geq 0} I^nt^n$ denote the Rees algebra of $I$. The Hilbert-Kunz multiplicity of  various blowup  algebras was estimated by K. Eto and K.-i. Yoshida in their paper \cite{etoYoshida}. Put $c(d) = (d/2)+ d/(d+1)!.$ They proved the following. 

\begin{Theorem}
	Let $(R,\mm)$ be a Noetherian local ring of prime characteristic $p > 0$ with $d =\dim R \geq 1.$ Then for any $\mm$-primary ideal $I$, we have
	\[ e_{HK}(\R(I)) \leq c(d) \ e_0(I). \]
	Moreover, equality holds if and only if $e_{HK}(R)=e_0(I).$ When this is the case, $e_{HK}(R)=e_0(R)$ and $e_{HK}(I)=e_0(I).$
\end{Theorem}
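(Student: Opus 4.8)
The plan is to reduce the bound to the case of the Rees algebra of the maximal ideal of a regular local ring, which is computed explicitly. First I would make the standard harmless reductions (pass to $R[X]_{\mm R[X]}$, then complete): one may assume $R$ is complete with infinite residue field, hence, being equicharacteristic $p$, containing a coefficient field $k$, and such that minimal reductions of ideals exist; neither reduction changes $e_{HK}(\R(I))$ or $e_0(I)$. Fix a minimal reduction $\aa=(a_1,\ldots,a_d)$ of $I$. Then $a_1,\ldots,a_d$ is a system of parameters, so $S:=k[[a_1,\ldots,a_d]]\subseteq R$ is a $d$-dimensional regular local ring over which $R$ is module-finite, with $\rank_S R = e_0(\aa;R)=e_0(I)$.

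Step one: evaluate $e_{HK}(\R(\nn))$ for a $d$-dimensional regular local ring $(S,\nn)$. Since $\Mm_{\R(\nn)}^{[q]}=\nn^{[q]}\R(\nn)+\nn^{[q]}t^q\R(\nn)$, reading off lengths degree by degree gives
\[ \ell\big(\R(\nn)/\Mm_{\R(\nn)}^{[q]}\big) \;=\; \sum_{n=0}^{q-1}\ell\!\left(\frac{\nn^n}{\nn^{[q]}\nn^n}\right)\;+\;\sum_{n\ge q}\ell\!\left(\frac{\nn^n}{\nn^{[q]}\nn^{n-q}}\right), \]
and for $\nn=(x_1,\ldots,x_d)$ each summand is the number of monomials $x_1^{b_1}\cdots x_d^{b_d}$ outside an explicit monomial ideal. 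With $n=\lfloor tq\rfloor$, the first summand equals $q^{d}\big(1+\tfrac{(d-1)t^d}{d!}\big)+O(q^{d-1})$ and the second $q^{d}\,\vol\{s\in[0,1]^d:\sum_i s_i\ge t\}+O(q^{d-1})$ (vanishing once $t\ge d$); passing to Riemann sums, $q^{-(d+1)}\ell(\R(\nn)/\Mm_{\R(\nn)}^{[q]})$ converges to $\int_0^1\big(1+\tfrac{(d-1)t^d}{d!}\big)dt+\int_1^d\vol\{s\in[0,1]^d:\sum_i s_i\ge t\}\,dt$, which an elementary computation (Fubini for the second integral, together with the value $1/(d+1)!$ of a $(d+1)$-simplex volume) identifies with $\tfrac d2+\tfrac d{(d+1)!}=c(d)$. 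Hence $e_{HK}(\R(\nn))=c(d)$.

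Step two: since $I$ is integral over $\aa=\nn_S R$ and $R$ is module-finite over $S$, the algebra $\R(I)$ is a module-finite $\R(\nn_S)$-module, birational over $\R(\aa)$, so $\rank_{\R(\nn_S)}\R(I)=\rank_S R=e_0(I)$. From $\Mm_{\R(\nn_S)}\R(I)\subseteq\Mm_{\R(I)}$ one gets $\Mm_{\R(\nn_S)}^{[q]}\R(I)\subseteq\Mm_{\R(I)}^{[q]}$, whence
\[ e_{HK}(\R(I)) \;\le\; \lim_q \frac{\ell\big(\R(I)/\Mm_{\R(\nn_S)}^{[q]}\R(I)\big)}{q^{d+1}} \;=\; \rank_{\R(\nn_S)}(\R(I))\cdot e_{HK}(\R(\nn_S)) \;=\; e_0(I)\,c(d), \]
the middle equality being the formula $e_{HK}(J,M)=\rank(M)\,e_{HK}(J)$ for a finite module $M$ over a local domain. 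For the equality statement I would combine this with the elementary chain $e_{HK}(R)=e_{HK}(\mm)\le e_0(\mm)\le e_0(I)$ (the middle step by Lech's limit formula applied to $\bb^{[q]}\subseteq\mm^{[q]}$ for a minimal reduction $\bb$ of $\mm$, the last since $I\subseteq\mm$): if $e_{HK}(R)=e_0(I)$ the chain collapses, forcing $e_{HK}(R)=e_0(R)$ and, via $e_0(I)=e_{HK}(\mm)\le e_{HK}(I)\le e_0(I)$, also $e_{HK}(I)=e_0(I)$. Finally, equality in the displayed inequality holds precisely when $\ell\big(\Mm_{\R(I)}^{[q]}/\Mm_{\R(\nn_S)}^{[q]}\R(I)\big)=o(q^{d+1})$; analysing this colength degree by degree — the degree-$0$ part of $\Mm_{\R(I)}/\Mm_{\R(\nn_S)}\R(I)$ is $\mm/\aa$, the degree-$n$ part is $I^n/\aa I^{n-1}$, and similarly for the Frobenius powers — one shows it is $o(q^{d+1})$ exactly when $e_{HK}(\mm)=e_0(\mm)=e_0(I)$, i.e. exactly when $e_{HK}(R)=e_0(I)$.

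The main obstacle is twofold: carrying out the monomial count of Step one and the passage from the degree sums to the polytope-volume integrals with enough uniformity in $t=n/q$; and the delicate two-sided estimate underlying the equality criterion, namely that $\ell(\Mm_{\R(I)}^{[q]}/\Mm_{\R(\nn_S)}^{[q]}\R(I))$ is $o(q^{d+1})$ if and only if $e_{HK}(R)=e_0(I)$, which needs good upper bounds for the Frobenius-twisted quotients $\mm^{[q]}I^n/\aa^{[q]}I^n$ and $(\mm^{[q]}I^n+I^{[q]}I^{n-q})/\aa^{[q]}I^{n-q}$ as well as a matching lower bound.
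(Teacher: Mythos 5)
This statement is not proved in the paper at all: it is Eto and Yoshida's theorem, quoted verbatim from \cite{etoYoshida} in the introduction as background, so there is no internal proof to measure your argument against; I can only assess it on its own terms.

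Your argument for the inequality is sound in outline and is the natural route. The reductions, the choice of a minimal reduction $\aa=(a_1,\dots,a_d)$ with Noether normalization $S=k[[a_1,\dots,a_d]]$, the identity $\rank_S R=e_0(I)$, the inclusion $\Mm_{\R(\nn_S)}^{[q]}\R(I)\subseteq\Mm_{\R(I)}^{[q]}$, and the rank formula for Hilbert--Kunz multiplicities of finite modules over a local domain all work as stated (one should record that $\dim_{\R(\nn_S)}\R(I)=d+1$ so that only top-dimensional components contribute, but that is routine). Your monomial count also checks out: $\int_0^1\bigl(1+\tfrac{(d-1)t^d}{d!}\bigr)dt+\int_1^d\vol\{s\in[0,1]^d:\textstyle\sum_i s_i\ge t\}\,dt=\tfrac d2+\tfrac d{(d+1)!}=c(d)$, and the concluding clause (that equality forces $e_{HK}(R)=e_0(R)$ and $e_{HK}(I)=e_0(I)$) does follow from your chain of inequalities via Lech's limit formula.

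The genuine gap is the biconditional, which you yourself flag as the main obstacle but do not close. You reduce ``equality iff $e_{HK}(R)=e_0(I)$'' to ``$\ell\bigl(\Mm_{\R(I)}^{[q]}/\Mm_{\R(\nn_S)}^{[q]}\R(I)\bigr)=o(q^{d+1})$ iff $e_{HK}(R)=e_0(I)$'' and then assert that a degree-by-degree analysis settles it, but neither implication is actually established. For the ``if'' direction, the obvious bound $\ell(\mm^{[q]}I^n/\aa^{[q]}I^n)\le\mu(I^n)\,\ell(\mm^{[q]}/\aa^{[q]})$ summed over $n<q$ gives only $o(q^{2d})$, useless for $d\ge 2$; one needs an estimate uniform in $n$, and the standard tool here is the tight-closure equivalence $e_{HK}(\aa)=e_{HK}(\mm)\Leftrightarrow\aa^*=\mm^*$ (the same device the paper uses in Theorem~\ref{dim1IJ} and Corollary~\ref{I=m}), which requires hypotheses (excellent, reduced, equidimensional) absent from your sketch. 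For the ``only if'' direction you need a \emph{lower} bound on the colength, i.e.\ that a positive proportion of the graded pieces contribute at least a fixed multiple of $q^d$ when $e_{HK}(R)<e_0(I)$, and you give no mechanism for producing one. As it stands the proposal proves the inequality and the final clause, but not the equality criterion.
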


In this paper, we compute the Hilbert-Kunz function and the Hilbert-Kunz multiplicity of certain ideals of the Rees algebra. We begin with the case of one-dimensional Noetherian local rings of prime characteristic, where, motivated by the result of Monsky, we prove the quasi-polynomial nature of Hilbert-Kunz function for certain classes of ideals of the Rees algebra. For rings of dimension at least 2, we prove that the generalized Hilbert-Kunz function is a piece-wise polynomial for certain class of ideals. The paper is organized as follows.
In section 2, we begin by proving the following result. 

\begin{Theorem}
	Let $(R,\mm)$ be a 1-dimensional Noetherian local ring with prime characteristic $p>0.$ Let $I,J$ be $\mm$-primary ideals. Then $e_{HK}((J,It)\R(I))=e_0(J).$
\end{Theorem}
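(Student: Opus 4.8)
The plan is to compute the Hilbert-Kunz function of $(J,It)$ in $\R(I)$ directly, degree by degree in the grading of $\R(I)$, and extract the leading term. Since $\R(I)$ is a $2$-dimensional ring (as $R$ is $1$-dimensional) over which $(J,It)$ is a primary ideal for the relevant maximal ideal, Monsky's one-dimensional result does not apply directly; instead, the key observation is that the $q$-th Frobenius power satisfies $(J,It)^{[q]} = (J^{[q]}, I^{[q]}t^q)\R(I)$, so that the quotient decomposes as a direct sum of graded pieces. First I would write
\[
\R(I)/(J,It)^{[q]}\R(I) = \bigoplus_{n \geq 0} \frac{I^n}{J^{[q]}I^n + I^{[q]}I^{\max(n-q,0)}}\, t^n,
\]
and observe that for $n \geq q$ the $n$-th graded piece is $I^n/(J^{[q]}I^n + I^{n+q-?})$ — more precisely $I^{[q]}I^{n-q} \supseteq I^{[q+n-q]} $ in the relevant range — so that for $n$ somewhat larger than $q$ the graded component vanishes because $I^{[q]}I^{n-q}$ already swallows $I^n$ once $n - q$ exceeds a reduction number bound. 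Thus only finitely many (roughly $q + r$ many, where $r = r(I)$) graded pieces contribute.

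Next I would estimate each contributing graded piece. For $0 \leq n \leq q$, the $n$-th piece is $\ell_R(I^n/J^{[q]}I^n) = \ell_R(R/J^{[q]}) - \ell_R(R/I^n) + \ell_R(I^n \cap \text{stuff})$ — more cleanly, $\ell_R(I^n / J^{[q]}I^n)$ is, up to bounded error coming from the $1$-dimensionality, equal to $\ell_R(R/J^{[q]}) = e_0(J)q + O(1)$ by Monsky's theorem applied to $J$ in $R$, since $I^n$ is $\mm$-primary and the Hilbert-Kunz function is additive up to bounded terms on short exact sequences. Indeed $0 \to I^n/J^{[q]}\cap I^n \to R/J^{[q]} \to R/I^n \to 0$ combined with $\ell_R((J^{[q]} \cap I^n)/J^{[q]}I^n) \leq \ell_R(R/I^n) = O(1)$ (independent of $q$) gives $\ell_R(I^n/J^{[q]}I^n) = e_0(J)q + O(1)$ uniformly for $0 \leq n \leq q$. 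For $q < n \leq q + r$ a similar but slightly more careful bound involving $I^{[q]}I^{n-q}$ shows each such piece contributes only $O(1)$, and there are only $r = O(1)$ of them.

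Summing over $n$: the $q+1$ pieces in the range $0 \le n \le q$ each contribute $e_0(J)q + O(1)$, giving $e_0(J)q^2 + O(q)$, and the remaining $O(1)$ pieces contribute $O(q)$ in total. Hence $\ell_{\R(I)}(\R(I)/(J,It)^{[q]}) = e_0(J)q^2 + O(q)$, and since $\dim \R(I) = 2$, dividing by $q^2$ and taking the limit yields $e_{HK}((J,It)\R(I)) = e_0(J)$. The main obstacle I anticipate is making the "$O(1)$ uniformly in the range $0 \le n \le q$" claim genuinely uniform: one must check that the length $\ell_R((J^{[q]} \cap I^n)/J^{[q]}I^n)$ is bounded independently of both $n$ and $q$, which should follow from $\ell_R(R/I^n)$ being bounded for the finitely-many relevant $n$ together with the Artin-Rees lemma controlling $J^{[q]} \cap I^n$, but the bookkeeping near $n \approx q$ (where both $J^{[q]}$ and the Frobenius power $I^{[q]}$ interact) is where care is needed.
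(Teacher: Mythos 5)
Your strategy---a direct graded computation of $\ell_R(\R(I)/(J,It)^{[q]})$---is entirely different from the paper's proof, which computes no lengths at all: there one uses the generalized Brian\c{c}on--Skoda theorem to show that $(J,It)\R(I)$ and $(x,yt)\R(I)$ have the same tight closure, where $(x)$ and $(y)$ are minimal reductions of $J$ and $I$; then Hochster--Huneke gives equality of the two Hilbert--Kunz multiplicities, $e_{HK}=e_0$ for the parameter ideal $(x,yt)$ of the two-dimensional local ring $\R(I)_{(\mm,It)}$, and finally the known formula $e_0((J,It)\R(I))=e_0(J)$ finishes the proof. A computation along your lines is viable (the paper carries out exactly this kind of graded bookkeeping for $J=I$ in its Theorem 2.5), but as written your argument has a genuine gap at its central estimate.

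The gap is the uniformity of $\ell_R(I^n/J^{[q]}I^n)=e_0(J)q+O(1)$ over the roughly $q$ values $0\le n\le q-1$. Your justification rests on the claim that $\ell_R(R/I^n)=O(1)$ independently of $q$, which is false in the relevant range: $\ell_R(R/I^n)\sim e_0(I)n$ and $n$ runs up to $q$, so this bound is $O(q)$ per graded piece and the accumulated slack is $O(q^2)$---the same order as the main term, so the coefficient of $q^2$ is not pinned down. (The Artin--Rees input you invoke has the same defect: the Artin--Rees constant for $J^{[q]}\cap I^n$ a priori depends on $q$.) The repair is not the short exact sequence you wrote but stabilization of the modules $I^n$: after passing to an infinite residue field (the paper does this via $R[x]_{\mm[x]}$), pick a minimal reduction $(y)$ of $I$ with reduction number $r$; for $n\ge r$, multiplication by $y^{n-r}$ gives a surjection $I^r/J^{[q]}I^r\twoheadrightarrow I^n/J^{[q]}I^n$ whose kernel has length at most $\ell_R(0:_{I^r}y^\infty)<\infty$, independent of $n$ and $q$, so all these pieces equal $e_0(J)q+O(1)$ with a single uniform constant, and only the finitely many $n<r$ are left to Monsky's one-dimensional theorem individually. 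With that fix your computation closes; without it the leading term is not established.
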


Next, we prove that the Hilbert-Kunz function of the ideal $(I,It)$ of the Rees algebra $\R(I)$, where $I$ is an $\mm$-primary ideal of a 1-dimensional ring, is a quasi-polynomial in $e$, for large $e.$ Recall that a quasi-polynomial of degree $d$ is a function $f : \ZZ \rightarrow \CC$ of the form 
\[ f(n) = c_d(n)n^d + c_{d-1}(n)n^{d-1} + \cdots + c_0(n), \] 
where each $c_i(n)$ is a periodic function and $c_d(n)$ is not identically zero. Equivalently, $f$ is a quasi-polynomial if there exists an integer $N>0$ (namely, a common period of $c_0, c_1, \ldots, c_d$) and polynomials $f_0, f_1, \ldots, f_{N-1}$ such that
$f(n) = f_i(n)$ if $n \equiv i (\mod N)$ (see \cite{stanley97}).

\begin{Theorem}
	Let $R$ be a 1-dimensional Noetherian local ring with prime characteristic $p>0.$ Let $I$ be an $\mm$-primary ideal. Let $r$ be the reduction number of $I$ and $\rho$ be the postulation number of $I.$ Put $\mathcal{I}=(I,It)\R(I).$ Let $q=p^e$, where $e \in \NN$ is large. 
	\begin{enumerate}[{\rm (1)}]
		\item If $\rho+1 \leq r$, then
		\[ \ell_R\left(\frac{\R(I)}{\mathcal{I}^{[q]}}\right) = e_0(I) q^2 - e_0(I) \binom{r}{2} + r \ e_1(I) +  \sum_{n=0}^{r-1} \ell_R\left(\frac{R}{I^n}\right) + 2 \sum_{n=0}^{r-1} \alpha_I(I^n,e). \]
		
		\item If $r < \rho+1$, then
		\[ \ell_R\left(\frac{\R(I)}{\mathcal{I}^{[q]}}\right) = e_0(I) q^2 - e_0(I) \left( r(r-1) - \frac{\rho(\rho+1)}{2} \right) + (2r-\rho-1)e_1(I) + \beta + 2 \sum_{n=0}^{r-1} \alpha_I(I^n,e), \] 
	\end{enumerate}
	where $\beta = \sum_{n=0}^{r-1} \ell_R\left(\frac{R}{I^n}\right) - \sum_{n=r}^{\rho} \ell_R\left(\frac{R}{I^n}\right)$ is a constant and $\alpha_I(I^n,e) = \ell_R(I^n/I^{[q]}I^n) - e_0(I)q$ is a periodic function in $e$, for large $e.$
	In other words, $\ell_R(\R(I)/\mathcal{I}^{[q]})$ is a quasi-polynomial for large $e.$
\end{Theorem}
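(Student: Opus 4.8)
The plan is to evaluate $\ell_R(\R(I)/\mathcal{I}^{[q]})$ one graded component at a time in the standard grading of $\R(I)=\bigoplus_{n\ge0}I^nt^n$, reduce each term to Hilbert--Samuel data of $R$ together with the functions $\alpha_I(I^n,e)$, and then collect terms. First, because $e_0(I)$, $e_1(I)$, $r$, $\rho$ and all the relevant lengths are unchanged on passing from $R$ to $R(X)=R[X]_{\mm R[X]}$, I may assume $R/\mm$ is infinite and fix a minimal reduction $(x)\subseteq I$ with $r_{(x)}(I)=r$; then $I^{n+1}=xI^n$, and hence $I^{n+k}=x^kI^n$, for all $n\ge r$ and $k\ge0$. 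Writing $I=(x_1,\dots,x_\mu)$, additivity of Frobenius gives $I^{[q]}=(x_1^q,\dots,x_\mu^q)$ and $\mathcal{I}^{[q]}=I^{[q]}\R(I)+I^{[q]}t^q\R(I)$, so the degree-$n$ component of $\R(I)/\mathcal{I}^{[q]}$ is $I^n/I^{[q]}I^n$ for $0\le n\le q-1$ and $I^n/I^{[q]}I^{n-q}$ for $n\ge q$. The crucial observation is the identity
\[
I^{[q]}I^n=I^{q+n}\qquad\text{for all }\ n\ge r,
\]
which holds since $I^{[q]}I^n\subseteq I^qI^n=I^{q+n}$ while $x^q\in I^{[q]}$ forces $I^{[q]}I^n\supseteq x^qI^n=x^{q+n-r}I^r=I^{q+n}$. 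In particular the degree-$n$ component vanishes for $n\ge q+r$, so
\[
\ell_R\!\left(\frac{\R(I)}{\mathcal{I}^{[q]}}\right)=\underbrace{\sum_{n=0}^{q-1}\ell_R\!\left(\frac{I^n}{I^{[q]}I^n}\right)}_{A}+\underbrace{\sum_{m=0}^{r-1}\ell_R\!\left(\frac{I^{q+m}}{I^{[q]}I^m}\right)}_{B}.
\]

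I would then compute $B$ and $A$. For $B$: from $I^{[q]}I^m\subseteq I^{q+m}\subseteq I^m$, $\ell_R(I^{q+m}/I^{[q]}I^m)=\ell_R(I^m/I^{[q]}I^m)-\ell_R(I^m/I^{q+m})$; since $\ell_R(I^m/I^{[q]}I^m)=e_0(I)q+\alpha_I(I^m,e)$ by definition and $\ell_R(I^m/I^{q+m})=P_I(q+m)-\ell_R(R/I^m)=e_0(I)(q+m)-e_1(I)-\ell_R(R/I^m)$ (as $q+m>\rho$, using $P_I(x)=e_0(I)x-e_1(I)$), summing over $0\le m\le r-1$ gives
\[
B=\sum_{m=0}^{r-1}\alpha_I(I^m,e)-e_0(I)\binom r2+r\,e_1(I)+\sum_{m=0}^{r-1}\ell_R\!\left(\frac{R}{I^m}\right).
\]
For $A$, I split the range at $n=r$ and, in case $(2)$, again at $n=\rho+1$. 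For $r\le n\le q-1$ the identity gives $\ell_R(I^n/I^{[q]}I^n)=P_I(q+n)-\ell_R(R/I^n)$, which is $e_0(I)q$ when $n>\rho$ and $e_0(I)(q+n)-e_1(I)-\ell_R(R/I^n)$ when $r\le n\le\rho$. Thus in case $(1)$ (where $\rho<r$), $A=\sum_{n=0}^{r-1}\big(e_0(I)q+\alpha_I(I^n,e)\big)+(q-r)e_0(I)q=e_0(I)q^2+\sum_{n=0}^{r-1}\alpha_I(I^n,e)$, and adding $B$ (with $\sum_{n=0}^{r-1}n=\binom r2$) gives formula $(1)$. In case $(2)$, $A=\sum_{n=0}^{r-1}\big(e_0(I)q+\alpha_I(I^n,e)\big)+\sum_{n=r}^{\rho}\big(e_0(I)(q+n)-e_1(I)-\ell_R(R/I^n)\big)+\sum_{n=\rho+1}^{q-1}e_0(I)q$; the total coefficient of $e_0(I)q$ is $r+(\rho-r+1)+(q-1-\rho)=q$, producing $e_0(I)q^2$, and after adding $B$, using $\sum_{n=r}^\rho n=\binom{\rho+1}2-\binom r2$, and recognizing $\beta=\sum_{n=0}^{r-1}\ell_R(R/I^n)-\sum_{n=r}^{\rho}\ell_R(R/I^n)$, one obtains formula $(2)$. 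Finally, since for each fixed $n$ the Hilbert--Kunz function of the $R$-module $I^n$ over the one-dimensional ring $R$ has the form $e_0(I)q+(\text{periodic in }e)$ (here $e_{HK}(I,I^n)=e_0(I)$), each $\alpha_I(I^n,e)$ is eventually periodic, and both formulas exhibit $\ell_R(\R(I)/\mathcal{I}^{[q]})$ as $e_0(I)q^2$ plus a constant plus an eventually periodic function of $e$, i.e. a quasi-polynomial.

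The main obstacle is the identity $I^{[q]}I^n=I^{q+n}$ for $n\ge r$ (equivalently $I^{[q]}I^r=I^{q+r}$): it simultaneously truncates the graded sum at degree $q+r$ and, more importantly, rewrites each surviving summand as an \emph{exact} difference of values of the Hilbert--Samuel polynomial $P_I$ rather than an asymptotic estimate, which is exactly why no term of order $q$ remains in the final answer. Everything after that is bookkeeping with $P_I(x)=e_0(I)x-e_1(I)$, splitting the summation ranges relative to $r$ and $\rho$, and collecting binomial coefficients. The two remaining points requiring a word are the reduction to infinite residue field and the eventual periodicity of $\alpha_I(I^n,e)$ (equivalently, that each $I^n$ has Hilbert--Kunz multiplicity $e_0(I)$), both of which follow from the one-dimensional Hilbert--Kunz facts assembled in Section~2.
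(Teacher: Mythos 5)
Your proposal is correct and follows essentially the same route as the paper: the same graded decomposition of $\mathcal{I}^{[q]}$, the same key identity $I^{[q]}I^n=I^{n+q}$ for $n\ge r$ via a minimal reduction $(x)$, Monsky's one-dimensional result (plus the associativity formula giving $e_{HK}(I,I^n)=e_0(I)$) for the terms with $n<r$, and the same case split at $r$ versus $\rho+1$; the only difference is cosmetic bookkeeping in how the sums $A$ and $B$ are regrouped before collecting the coefficient of $e_0(I)q$.
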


In the above setup, if $I$ is a parameter ideal and $J$ is an $\mm$-primary ideal of a Cohen-Macaulay local ring $R$, then we prove that for large $e$, 
\[ \ell_R \left(\frac{\R(I)}{(J,It)^{[q]}}\right) = q^2e_0(J) + q \ \alpha_J(e),\] 
where $\alpha_J(e) = \ell_R (R/J^{[q]}) - e_0(J)q$ is a periodic function of $e.$ Let $R=k[[X,Y]]/(X^5-Y^5)$, where $k$ is a field of prime characteristic $p \equiv \pm 2 (\mod 5)$ and $\mm$ is the maximal ideal of $R.$ As a consequence of the above results, we compute the Hilbert-Kunz function of the ideals $(\mm,\mm t)\R(\mm)$ and $(\mm,It)\R(I)$, where $I=(x)$ is a parameter ideal and $x$ is the image of $X$ in $R.$

Let $R$ be a $d$-dimensional Noetherian ring. Let $I$ be an ideal of finite co-length. Aldo Conca introduced the concept of generalized Hilbert-Kunz function in \cite{conca}. For $s \in \mathbb{N}$, let $I^{[s]} = (a_1^s, \dots, a_n^s)$ where $\{a_1,a_2,\dots,a_n\}$  is a fixed set of generators of $I.$ Then the {\it generalized Hilbert-Kunz function} is defined as
\[ HK_{R,I}(s) = \ell_R\left(\frac{R}{I^{[s]}}\right). \]
The generalized Hilbert-Kunz multiplicity is defined as $\lim\limits_{s \rightarrow \infty} HK_{R,I}(s)/s^d$ whenever the limit exists. If $\text{char} (R) = p > 0$ and $q$ is a power of $p,$ then the generalized Hilbert-Kunz function (multiplicity) coincides with the Hilbert-Kunz function (multiplicity) and is independent of the choice of the generators of $I.$ 

In section 3, we find the generalized Hilbert-Kunz function of the ideal $\mathcal {I}=(I,It)$ in $\R(I)$, when $I$ is generated by a regular sequence in a $d$-dimensional Cohen-Macaulay local ring.  Our approach requires knowledge of the Hilbert-Samuel function of the $R$-module $I^{[s]}.$ We obtain an explicit description of the function $F(s,n)=\ell_R (I^{[s]}/I^{[s]}I^n)$ for a fixed $s \in \mathbb{N}$ and then use some properties of Stirling numbers of the first kind and Stirling numbers of the second kind to prove the following result.
\begin{Theorem}
	Let $R$ be a $d$-dimensional Cohen-Macaulay local ring and $d \geq 2.$ Let $I$ be a parameter ideal of $R$ and $\mathcal I=(I,It)\R(I).$ 
	{Put $c(d) = (d/2)+ d/(d+1)!.$} 
	Let $s \in \mathbb{N}.$  \\
	{\rm(1)} Let $s<d.$ Write $d=k_1s+k_2$  where $k_2 \in \{0,1,\ldots,s-1\}.$ If $k_2=0$, then
	\[ \ell_R \left(\frac{\mathcal{R}(I)}{\mathcal I^{[s]}}\right) = 
	e_0(I) \left[ (d-k_1+1)s^{d+1} + d \binom{s+d-1}{d+1} - \sum_{i=0}^{d-1} \left[ (-1)^{i} \binom{d}{i} \binom{(d-i-k_1+1)s+d-1}{d+1} \right] \right]. \]
	If $k_2 \neq 0,$ then
	\[ \ell_R \left(\frac{\mathcal{R}(I)}{\mathcal I^{[s]}}\right) = 
	e_0(I) \left[ (d-k_1)s^{d+1} + d \binom{s+d-1}{d+1} - \sum_{i=0}^{d-1} \left[ (-1)^{i} \binom{d}{i} \binom{(d-i-k_1)s+d-1}{d+1} \right] \right]. \]
	{\rm(2)} Let $s \geq d.$ Then
	\[ \ell_R \left(\frac{\mathcal{R}(I)}{\mathcal I^{[s]}}\right) =  
	e_0(I) \left[ \frac{ds^{d+1}}{2} - \frac{s^d(d-2)}{2} + d \binom{s+d-1}{d+1} \right]. \]  
	In other words, for $s$ large, 
	\[ \ell_R \left(\frac{\R(I)}{\mathcal I^{[s]}}\right) = c(d)e_0(I) s^{d+1} + e_0(I)\left(\frac{d-2}{2}\right)\left(\frac{1}{(d-1)!} - 1 \right)s^d + e_0(I) \frac{d(d-1)(3d-10)}{24(d-1)!} s^{d-1} + \cdots , \]
	implying that the generalized Hilbert-Kunz multiplicity $e_{HK}((I,It)\R(I)) = c(d) \ e_0(I).$
\end{Theorem}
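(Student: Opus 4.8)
The plan is to evaluate $\ell_R(\R(I)/\mathcal I^{[s]})$ one graded piece at a time and then add up. Write $I=(a_1,\dots,a_d)$ with $a_1,\dots,a_d$ a regular sequence. Then $\mathcal I=(I,It)\R(I)$ is generated by $a_1,\dots,a_d$ (in degree $0$) and $a_1t,\dots,a_dt$ (in degree $1$), so $\mathcal I^{[s]}$ is generated by $a_1^s,\dots,a_d^s$ in degree $0$ and $a_1^st^s,\dots,a_d^st^s$ in degree $s$. Matching the degree-$n$ component $I^nt^n$ of $\R(I)$ against that of $\mathcal I^{[s]}$ gives $(\mathcal I^{[s]})_n=I^{[s]}I^nt^n$ for $0\le n<s$ and $(\mathcal I^{[s]})_n=I^{[s]}I^{n-s}t^n$ for $n\ge s$ (using $I^n\subseteq I^{n-s}$ and $I^0=R$). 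Hence
\[
\ell_R\!\left(\frac{\R(I)}{\mathcal I^{[s]}}\right)=\sum_{n=0}^{s-1}\ell_R\!\left(\frac{I^n}{I^{[s]}I^n}\right)+\sum_{n\ge s}\ell_R\!\left(\frac{I^n}{I^{[s]}I^{n-s}}\right),
\]
and the second sum is finite since $I^{[s]}$ is a reduction of $I^s$, the terms vanishing once $n-s\ge(d-1)(s-1)$.

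The second step is to compute $F(s,n)=\ell_R(I^{[s]}/I^{[s]}I^n)$, equivalently $\ell_R(R/I^{[s]}I^n)=F(s,n)+s^de_0(I)$, for fixed $s$. Because $a_1,\dots,a_d$ is a regular sequence, $\gr_I(R)\cong(R/I)[X_1,\dots,X_d]$ is a polynomial ring and the leading ideal of the ``monomial'' ideal $I^{[s]}I^m$ is $(X_j^sX^{\beta}\colon 1\le j\le d,\ |\beta|=m)$; therefore $\ell_R(R/I^{[s]}I^m)=e_0(I)\cdot\#\{\gamma\in\NN^d:\gamma\notin\Delta_{s,m}\}$, where $\Delta_{s,m}=\{\gamma:\gamma_i\ge s\text{ for some }i\text{ and }|\gamma|\ge m+s\}$. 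A short inclusion--exclusion (minding the convention that $\binom{a}{d}$ counts lattice points, hence is $0$ for $a<d$) then gives, for $0\le n<s$,
\[
\ell_R\!\left(\frac{I^n}{I^{[s]}I^n}\right)=e_0(I)\Big(s^d+(d-1)\binom{n+d-1}{d}\Big),
\]
while for $n\ge s$ the lattice points of $I^n$ outside $I^{[s]}I^{n-s}$ are exactly the $\gamma$ with every $\gamma_i\le s-1$ and $|\gamma|\ge n$, so $\ell_R(I^n/I^{[s]}I^{n-s})=e_0(I)\cdot\#\{\gamma\in[0,s-1]^d:|\gamma|\ge n\}$.

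It then remains to evaluate the two sums. The first is $s^{d+1}e_0(I)+(d-1)e_0(I)\binom{s+d-1}{d+1}$ by hockey stick. Swapping the order of summation turns the second into $e_0(I)\sum_{\gamma\in[0,s-1]^d}\max(|\gamma|-s+1,\,0)$; I would write this as the full sum $\sum_{\gamma\in[0,s-1]^d}(|\gamma|-s+1)=\tfrac{d-2}{2}s^d(s-1)$ minus its part over $|\gamma|<s$, namely $\sum_{k=0}^{s-1}\binom{k+d-1}{d-1}(k-s+1)$, and simplify the latter via $k\binom{k+d-1}{d-1}=d\binom{k+d-1}{d}$ together with hockey stick. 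Adding the contributions and using $(s-1)\binom{s+d-1}{d}=(d+1)\binom{s+d-1}{d+1}$ collapses everything to $e_0(I)\big(\tfrac d2 s^{d+1}-\tfrac{d-2}{2}s^d+d\binom{s+d-1}{d+1}\big)$; the case distinctions recorded in the statement ($s<d$ with sub-cases according to whether $s\mid d$, and $s\ge d$) arise from tracking precisely which binomials in the inclusion--exclusion vanish, i.e. from $\lfloor d/s\rfloor$, and one checks these reassemble to the displayed expressions. Finally, expanding $\binom{s+d-1}{d+1}$ as a polynomial in $s$ --- its coefficients being read off via Stirling numbers of the first kind --- and using $\sum_{i=0}^d(-1)^i\binom di i^m=(-1)^d d!\,S(m,d)$ (Stirling numbers of the second kind) to kill the top coefficients of the alternating sums, yields the stated asymptotic expansion; its leading coefficient is $\tfrac d2+\tfrac d{(d+1)!}=c(d)$, identifying the generalized Hilbert--Kunz multiplicity as $c(d)e_0(I)$.

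The main obstacle I expect is the bookkeeping, concentrated in the last two steps: justifying cleanly that $\ell_R(R/\mathfrak a)=e_0(I)\cdot|\NN^d\setminus\Delta|$ for these monomial ideals (which rests on the polynomial structure of $\gr_I(R)$ and the identification of leading ideals), handling binomial conventions for negative arguments in the inclusion--exclusion, and above all matching the resulting alternating binomial sums to the closed forms in all three cases. The underlying geometry --- counting lattice points of the cube $[0,s-1]^d$ truncated by the half-space $|\gamma|\ge n$ --- is transparent; making the algebra reproduce the exact polynomials is where the Stirling-number identities do the work.
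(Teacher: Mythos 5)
Your decomposition of $\R(I)/\mathcal I^{[s]}$ into graded pieces is exactly the paper's starting point, and your lattice-point expressions for the individual lengths are correct: I checked that $\ell_R(I^n/I^{[s]}I^n)=e_0(I)\bigl(s^d+(d-1)\binom{n+d-1}{d}\bigr)$ for $n<s$ and $\ell_R(I^n/I^{[s]}I^{n-s})=e_0(I)\cdot\#\{\gamma\in[0,s-1]^d:|\gamma|\ge n\}$ for $n\ge s$ agree with the paper's three-case formula for $F(s,n)=\ell_R(I^{[s]}/I^{[s]}I^n)$, and your summation does collapse to $e_0(I)\bigl(\tfrac d2 s^{d+1}-\tfrac{d-2}{2}s^d+d\binom{s+d-1}{d+1}\bigr)$ — which, pleasantly, reproduces the paper's answer even for $s<d$ (e.g.\ it gives $23\,e_0(I)$ at $d=3$, $s=2$, matching the paper's example), so your route would in addition show that the theorem's case distinction is only a matter of how the same number is written. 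The final Stirling-number manipulations are the same as the paper's Lemmas on $\binom{s+d-1}{d+1}$ and the alternating binomial sum.

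The genuine gap is the step you yourself flag as the ``main obstacle'': the claim that $\ell_R(R/I^{[s]}I^m)=e_0(I)\cdot\#(\NN^d\setminus\Delta_{s,m})$, i.e.\ that the initial ideal of $I^{[s]}I^m$ in $G(I)\cong(R/I)[X_1,\dots,X_d]$ is generated by the initial forms $X_j^sX^\beta$ of the obvious monomial generators. Only one inequality is automatic: since those initial forms certainly lie in the initial ideal, $\ell_R(R/I^{[s]}I^m)=\ell(G(I)/(I^{[s]}I^m)^*)\le\ell(G(I)/(X_j^sX^\beta))$. The reverse inequality — that no further initial forms appear — is precisely the content that the paper has to work for: it runs the Koszul complex of the $G(I)$-regular sequence $(x_1^s)^*,\dots,(x_d^s)^*$ to control the cokernel $I^{n+s}/(I^{[s]}I^n+I^{n+s+1})$, and invokes the Valabrega--Valla criterion to identify $I^{[s]}\cap I^{n+s+1}$ with $I^{[s]}I^{n+1}$; an equivalent route would be to prove that monomial ideals in a regular sequence admit the Taylor complex as a resolution, so that colon and intersection formulas mirror the polynomial-ring case. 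Without one of these inputs your length formula is only an upper bound, and everything downstream is conditional. So: right skeleton, correct and arguably cleaner combinatorics, but the crux of the proof is asserted rather than proved.
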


As a consequence, we obtain the following result.

\begin{Corollary}
	Let $(R,\mm)$ be a $d$-dimensional regular local ring with $d\geq 2.$ Then for $s \geq d$,
	\[ \ell_R \left(\frac{\R(\mm)}{(\mm,\mm t)^{[s]}}\right) = \frac{ds^{d+1}}{2} - \frac{s^d(d-2)}{2} + d \binom{s+d-1}{d+1}. \]
\end{Corollary}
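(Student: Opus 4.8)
The plan is to deduce this as a direct specialization of the preceding theorem to the ideal $I=\mm$. The first step is to check the hypotheses. A $d$-dimensional regular local ring $(R,\mm)$ is Cohen-Macaulay, and if $x_1,\dots,x_d$ is a regular system of parameters then $\mm=(x_1,\dots,x_d)$ and $x_1,\dots,x_d$ is a regular sequence; thus $\mm$ is a parameter ideal of $R$ generated by a regular sequence, and we may apply the theorem with $I=\mm$. Moreover, with this fixed choice of generators, the ideal $(\mm,\mm t)$ of $\R(\mm)$ is generated by $x_1,\dots,x_d,x_1t,\dots,x_dt$, so $(\mm,\mm t)^{[s]}$ is precisely $\mathcal I^{[s]}$ in the notation of the theorem and the generalized Hilbert-Kunz function $\ell_R(\R(\mm)/(\mm,\mm t)^{[s]})$ is the one computed there.

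Next I would supply the single numerical input needed: for a regular local ring one has $\ell_R(R/\mm^n)=\binom{n+d-1}{d}$ for all $n\ge 0$, hence the multiplicity is $e_0(\mm)=1$. Substituting $e_0(I)=1$ and $s\ge d$ into part (2) of the theorem gives
\[
\ell_R\!\left(\frac{\R(\mm)}{(\mm,\mm t)^{[s]}}\right)=\frac{ds^{d+1}}{2}-\frac{s^d(d-2)}{2}+d\binom{s+d-1}{d+1},
\]
which is the asserted formula. No genuine obstacle arises here: the entire content is already carried by the theorem, and the corollary is just the observation that its hypotheses are automatically met by $(R,\mm)$ regular with $I=\mm$, together with the normalization $e_0(\mm)=1$.

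Optionally I would add two remarks. First, part (1) of the theorem yields the complementary closed formulas in the range $s<d$ (splitting according to whether $s\mid d$), so the generalized Hilbert-Kunz function of $(\mm,\mm t)\R(\mm)$ is known for all $s$. Second, the leading term of the expansion, $c(d)e_0(\mm)s^{d+1}=c(d)s^{d+1}$, shows that the generalized Hilbert-Kunz multiplicity of $\R(\mm)$ equals $c(d)$, which is consistent with the equality case of the Eto-Yoshida bound since here $e_{HK}(R)=e_0(R)=e_0(\mm)=1$.
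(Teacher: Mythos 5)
Your proposal is correct and matches the paper's proof, which is simply ``Use $I=\mm$ in Theorem \ref{dimdCMsop}''; your verification that $\mm$ is a parameter ideal of the Cohen-Macaulay ring $R$ and that $e_0(\mm)=1$ supplies exactly the (implicit) details behind that one-line argument.
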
 

{In \cite{bgv}, we continue our exploration of the nature of Hilbert-Kunz function, where we give an expression of the generalized Hilbert-Kunz function of the maximal ideal of the Rees algebra of the maximal ideal of Stanley-Reisner rings.
A precise formula for the Hilbert-Samuel multiplicity of ideals of the Rees algebra was computed, in terms of mixed multiplicities, in \cite{jkvRees}. It is natural to ask if there is a parallel notion of mixed Hilbert-Kunz multiplicities and if the Hilbert-Kunz multiplicity can be expressed in terms of these numbers. This is still an open question.}

{\bf Acknowledgements}: We thank Vijaylaxmi Trivedi, Anurag Singh  and K.- i. Watanabe for several discussions and their lectures at IIT Bombay on Hilbert-Kunz multiplicity and positive characteristic methods.

\section{The Hilbert-Kunz function in dimension 1}

Let $(R,\mm)$ be a $1$-dimensional Noetherian local ring with prime characteristic $p>0$. In this section, we calculate the Hilbert-Kunz function and Hilbert-Kunz multiplicity of certain ideals of the Rees ring $\R(I)$, where $I$ is an $\mm$-primary ideal of $R.$

Let $I,J$ be $\mm$-primary ideals of $R.$ We begin by calculating $e_{HK}((J,It)\R(I))$, the Hilbert-Kunz multiplicity of the ideal $(J,It)$ in $\R(I).$ Recall that if a Noetherian ring $R$ has prime characteristic $p$, then $x\in R$ is said to be in the {\it tight closure} $I^*$ of an ideal $I$ if there exists $c\in R^o:=\{a\in R\mid a \notin \mathfrak p \text{ for any minimal prime } \mathfrak p \subset R\}$ such that $cx^q\in I^{[q]}$ for all large $q=p^e.$ 
In \cite[Theorem 8.17(a)]{hochsterHunekeTight}, M. Hochster and C. Huneke proved that if $J \subseteq I$ are $\mm$-primary ideals of $R$ such that $I^*=J^*$,  then $e_{HK}(I)=e_{HK}(J).$ 

For an ideal $I$ in a ring $R$, the {\it integral closure of $I$}, denoted by $\overline{I}$, is the ideal which consists of elements $x \in R$ such that $x$ satisfies an equation of the form $x^n + a_1x^{n-1} + \cdots + a_n = 0$ for some $a_i \in I^i$, $1 \leq i \leq n.$ The following generalization of the Brian\c{c}on-Skoda theorem was given by Hochster and Huneke.
\begin{Theorem} [{\cite[Theorem 5.4]{hochsterHunekeTight}}]
	Let $R$ be a Noetherian ring of prime characteristic $p$ and let $I$ be an ideal of positive height generated by $n$ elements. Then for every $m \in \NN$, $\overline{I^{n+m}} \subseteq (I^{m+1})^*.$
\end{Theorem}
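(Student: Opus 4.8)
The plan is to pass from integral closure to a single Frobenius-stable multiplier, by combining the valuative criterion for integral closure with the behaviour of the ideals $I^{[q]}$ under valuations, and then to invoke the classical Brian\c{c}on-Skoda theorem (for regular rings) together with a Jacobian-ideal bound; the crux will be to keep the multiplier independent of $q$. First I would carry out the standard reductions of tight-closure theory. Since $\hei I>0$, the ideal $I$ lies in no minimal prime of $R$, so membership of an element in $\overline{I^{m+1}}$ or in $(I^{m+1})^{*}$ can be checked modulo the (finitely many) minimal primes --- in one direction by persistence of tight closure, in the other by a patching of multipliers inside $R^{o}$, which is exactly what $\hei I>0$ permits. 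Localizing, completing, and applying the $\Gamma$-construction of Hochster-Huneke, we may assume $R$ is an F-finite complete local domain; in particular $R$ is reduced and excellent and admits a module-finite Noether normalization $A=k[[z_{1},\dots,z_{d}]]\hookrightarrow R$ for which $\operatorname{Frac}(R)$ is finite and separable over $\operatorname{Frac}(A)$. Fix once and for all a nonzero element $c$ of the Jacobian ideal $\mathcal J_{R/A}$, which is nonzero precisely because $R$ is generically separable over $A$; then $c\in R^{o}$.

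Write $I=(x_{1},\dots,x_{n})$ and fix $u\in\overline{I^{n+m}}$. By the valuative criterion for integral closure in the domain $R$, every valuation $v$ of $K=\operatorname{Frac}(R)$ that is nonnegative on $R$ satisfies $v(u)\ge (n+m)\,v(I)$, where $v(I):=\min_{i}v(x_{i})$. Since $v(I^{[q]})=\min_{i}v(x_{i}^{q})=q\,v(I)$ for every $q=p^{e}$, it follows that $v(u^{q})=q\,v(u)\ge (n+m)\,v(I^{[q]})$ for all such $v$, and the valuative criterion again gives
\[
 u^{q}\in\overline{\bigl(I^{[q]}\bigr)^{\,n+m}}\qquad\text{for every }q .
\]
Observe that $I^{[q]}=(x_{1}^{q},\dots,x_{n}^{q})$ is again generated by $n$ elements and that $\bigl(I^{[q]}\bigr)^{m+1}=\bigl(I^{m+1}\bigr)^{[q]}$, since in characteristic $p$ the Frobenius power of a product of ideals is the product of their Frobenius powers.

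It remains to absorb the exponent $q$ into the fixed element $c$. For this I would use the Lipman-Sathaye Jacobian bound: for every ideal $L$ of $R$ generated by $n$ elements and every $k\ge 0$, one has $c\cdot\overline{L^{\,n+k}}\subseteq L^{\,k+1}$. Applying this with $L=I^{[q]}$ and $k=m$ yields
\[
 c\cdot\overline{\bigl(I^{[q]}\bigr)^{\,n+m}}\ \subseteq\ \bigl(I^{[q]}\bigr)^{m+1}=\bigl(I^{m+1}\bigr)^{[q]},
\]
so that $c\,u^{q}\in\bigl(I^{m+1}\bigr)^{[q]}$ for \emph{all} $q$; since $c\in R^{o}$, this is precisely the statement $u\in(I^{m+1})^{*}$, and tracing back through the reductions completes the argument. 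I expect the genuine obstacle to be exactly this uniformity in $q$. For a single $q$ the containment $\overline{(I^{[q]})^{n+m}}\subseteq (I^{m+1})^{[q]}$ already follows, after inverting one ($q$-independent) element making $R$ regular, from the classical Brian\c{c}on-Skoda theorem; but upgrading this to one multiplier valid for every $q$ is where the positive characteristic hypothesis is really used, whether through the Jacobian ideal as above or, as in Hochster-Huneke's original argument, through the phantom homology of the Koszul complexes on $x_{1}^{q},\dots,x_{n}^{q}$. The reductions in the first step, although by now routine, are themselves somewhat delicate because of the subtle behaviour of tight closure under localization.
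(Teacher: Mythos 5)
This statement is quoted in the paper directly from Hochster--Huneke \cite[Theorem 5.4]{hochsterHunekeTight}; the paper supplies no proof of its own, so your argument can only be measured against the original. Judged on that basis, it contains a genuine gap in the stated generality, and it also routes an elementary fact through much deeper machinery. The theorem is asserted for an \emph{arbitrary} Noetherian ring of characteristic $p$, but your opening reduction --- localize, complete, apply the $\Gamma$-construction, then ``patch multipliers'' --- is not available at that level. Persistence gives $u\in I^{*}\Rightarrow u\in (IR_{\mm})^{*}$ for every maximal ideal $\mm$, but the converse direction you need would require assembling the local multipliers $c_{\mm}\in (R_{\mm})^{o}$, one for each of infinitely many maximal ideals, into a single $c\in R^{o}$; the standard patching argument works only over the finitely many minimal primes, and positive height of $I$ does not rescue this. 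Completion is equally problematic without an excellence hypothesis: $\widehat{R}$ of a local domain need not be reduced, and the multiplier your argument produces (an element of a Jacobian ideal of $\widehat{R_{P}}$ over a power series subring) lives only in the completion, so the containment $c\,u^{q}\in (I^{m+1})^{[q]}\widehat{R_{P}}$ does not descend. Finally, the uniform Lipman--Sathaye bound $c\cdot\overline{L^{\,n+k}}\subseteq L^{k+1}$ is itself a substantially harder theorem than the one being proved.

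What is salvageable is the shape of your argument: one fixed $c$, the correct identity $(I^{[q]})^{m+1}=(I^{m+1})^{[q]}$, and the (correct) observation that $u^{q}\in\overline{(I^{[q]})^{\,n+m}}$. But the uniform multiplier comes for free from the integral dependence relation, with no valuations, Noether normalization, or Jacobians. If $u\in\overline{J}$ with $J=I^{n+m}$, then $J$ is a reduction of $J+(u)$, so $(J+(u))^{N}=J^{N-N_{0}}(J+(u))^{N_{0}}$ and hence $u^{N}\in J^{N-N_{0}}$ for all $N\ge N_{0}$. Since $\hei I>0$, prime avoidance supplies $c\in J^{N_{0}}\cap R^{o}$, and then $c\,u^{q}\in J^{q}=I^{(n+m)q}$ for all $q=p^{e}\ge N_{0}$. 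Writing each exponent $a_{i}=qb_{i}+r_{i}$ with $0\le r_{i}\le q-1$, a monomial $x_{1}^{a_{1}}\cdots x_{n}^{a_{n}}$ with $\sum a_{i}=(n+m)q$ satisfies $q\sum b_{i}\ge (n+m)q-n(q-1)>mq$, so $I^{(n+m)q}\subseteq (I^{[q]})^{m+1}=(I^{m+1})^{[q]}$, giving $u\in (I^{m+1})^{*}$ directly, for any Noetherian ring. You correctly identified uniformity in $q$ as the crux; the gap is that your mechanism for achieving it both overshoots and fails to apply in the generality claimed.
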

It now follows that the tight closure and integral closure of principal ideals coincide.

\begin{Theorem} \label{dim1IJ}
	Let $(R,\mm)$ be a 1-dimensional Noetherian local ring with prime characteristic $p>0.$ Let $I,J$ be $\mm$-primary ideals. Then $e_{HK}((J,It)\R(I))=e_0(J).$
\end{Theorem}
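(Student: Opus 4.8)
The plan is to reduce the computation of $e_{HK}((J,It)\R(I))$ to a statement about tight closures, then invoke the Hochster--Huneke result that tight-closure-equal $\mm$-primary ideals have the same Hilbert--Kunz multiplicity. Note first that $\R(I)$ is a $2$-dimensional Noetherian domain-like ring (more precisely, $\dim \R(I) = d+1 = 2$ since $\dim R = 1$), and $\mathcal{I} := (J,It)\R(I)$ is a height-$2$ ideal of finite colength over the local ring $\R(I)_{\Mm}$, where $\Mm = (\mm, It)\R(I)$ is the relevant maximal (graded) ideal; Hilbert--Kunz multiplicity is computed there. The key observation is that $\mathcal{I}$ and the ideal $\mathcal{J} := (J, Jt)\R(I) = J\,\R(I)$ should have the same tight closure, since $It \subseteq \overline{J\R(I)}$: indeed $I \subseteq \overline{J}$ because $J$ is a reduction of $I$ (both being $\mm$-primary in a $1$-dimensional ring, so $\overline{I} = \overline{J}$ if $e_0(I) = e_0(J)$ — but in general we want $It$ integral over $J\R(I)$, which follows because powers of $It$ land in $I^n t^n$ and $I^n \subseteq \overline{J^n}$ when... ). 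Let me instead take $\mathcal{J} = (J, It')$ carefully.

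Here is the cleaner route. First I would show $\overline{\mathcal{I}} = \overline{(J, Jt)\R(I)}$, or at least that $\mathcal{I}^* = ((J,Jt)\R(I))^*$. The element $It \in \R(I)$ is integral over $J\R(I)$: for $xt$ with $x \in I$, one has $(xt)^n \in I^n t^n$, and since $I$ is $\mm$-primary of dimension $1$ with minimal reduction $J$ (assuming $R/\mm$ infinite, which one reduces to by the standard faithfully flat extension $R \to R[Z]_{\mm R[Z]}$), we get $I^{r+1} = JI^r$, hence $I^n \subseteq \overline{J^n}$ for all $n$, giving $(xt)^n \in \overline{J^n t^n} \subseteq \overline{(J\R(I))^n}$, so $xt \in \overline{J\R(I)}$. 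Therefore $\overline{\mathcal{I}} = \overline{J\R(I)}$, and since integral closure is contained in tight closure for ideals of positive height (this is where Theorem on Briançon--Skoda, or just the basic containment $\bar{K} \subseteq K^*$, applies), we get $\mathcal{I}^* = (J\R(I))^*$.

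Next I would compute $e_{HK}(J\R(I))$ directly. Since $\R(I)_{\Mm}$ has dimension $2$ and $J = (a)$ may be taken principal (as $J$ is $\mm$-primary in a $1$-dimensional ring, its minimal reduction is principal, generated by $a$ with $e_0(J) = \ell_R(R/(a))$... for the multiplicity use $e_0(J) = e_0((a)) = \ell_R(R/aR)$ up to the reduction), the ideal $J\R(I) = (a)\R(I)$ is generated, up to tight closure, by a single element. But a principal ideal generated by a nonzerodivisor in a $2$-dimensional ring does not have finite colength — so one must be more careful: $J\R(I)$ is genuinely height $1$. The resolution is that $\mathcal{I} = (J, It)\R(I)$ really is $\Mm$-primary, and its tight closure equals that of $(a, It)\R(I)$; then I would directly estimate $\ell_{\R(I)}(\R(I)/(a, It)^{[q]}\R(I))$ by a graded count: $(a,It)^{[q]} \R(I) = (a^q, I^{[q]}t)\R(I)$, and in degree $n$ this is $a^q I^n + I^{[q]}I^{n-1}$ inside $I^n$, so the colength in degree $n$ is $\ell_R(I^n/(a^q I^n + I^{[q]}I^{n-1}))$. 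Summing over $n \geq 0$ and using that $I^{[q]} \supseteq (a^q)$ together with $a^q I^n = a I^{[q']}\cdots$ (Frobenius on the principal ideal), the sum telescopes to something of the form $\ell_R(R/a^q R) + \sum_{n\geq 1}(\text{bounded terms})$, and one extracts $\ell_R(R/a^qR) = q\, e_0((a)) + O(1) = q\, e_0(J) + O(1)$, while the remaining sum is $O(q)$ with the $q^2$-coefficient vanishing. Hence $e_{HK}((a,It)\R(I)) = e_0(J)$, and by the tight-closure invariance, $e_{HK}(\mathcal{I}) = e_0(J)$.

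The main obstacle I anticipate is the bookkeeping in the graded colength computation: one must show carefully that the contribution $\sum_{n \geq 1} \ell_R(I^n/(a^q I^n + I^{[q]} I^{n-1}))$ has vanishing coefficient of $q^2$ and contributes at most $O(q)$ to the whole — this requires knowing that $I^{[q]} I^{n-1} \supseteq I^{\,n-1+cq}$ for a suitable constant $c$ (Briançon--Skoda / the relation $I^{[q]} \supseteq \overline{I^{\,2q}}$ type bound) so that for $n$ beyond some linear-in-$q$ threshold the $n$-th graded piece vanishes, while for smaller $n$ each piece is bounded by $\ell_R(R/a^qR) = O(q)$. Getting the right threshold and summing gives $O(q) \cdot$(linear range) which naively is $O(q^2)$, so one needs the sharper observation that for each fixed residue the piece $\ell_R(I^n / (a^q I^n + I^{[q]}I^{n-1}))$ is actually bounded \emph{independent of $q$} for $n$ in the relevant range (because $a^q I^n \supseteq$ a large power of $I$ times $I^n$), collapsing the sum to $O(q)$. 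Alternatively — and this may be the slicker path the authors take — one sidesteps the direct count entirely by comparing with the already-established Theorem giving $e_{HK}((J,It)\R(I))$ for the special structure, or by noting $\mathcal{I}^* = \mathcal{J}^*$ where $\mathcal{J}$ is an ideal whose Hilbert--Kunz multiplicity is visibly $e_0(J)$, e.g. an ideal generated by a system of parameters of $\R(I)_{\Mm}$ whose colength in $R$-terms is transparent.
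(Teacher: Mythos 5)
Your overall strategy---replace $(J,It)\R(I)$ by an ideal with the same tight closure and invoke the Hochster--Huneke theorem that tight-closure-equal $\mm$-primary ideals have equal Hilbert--Kunz multiplicity---is exactly the paper's opening move, but the execution has genuine errors. First, the attempted reduction to $J\R(I)$ cannot work: $J$ is \emph{not} assumed to be a reduction of $I$ (they are arbitrary, unrelated $\mm$-primary ideals), so $It$ need not be integral over $J\R(I)$; and in any case $J\R(I)$ has height $1$ in the $2$-dimensional ring $\R(I)$, so it cannot share a tight closure with the $(\mm,It)$-primary ideal $(J,It)\R(I)$. You notice the height problem, but the first half of the argument is built on the false premise. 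Second, the direct graded count that replaces it rests on a wrong Frobenius power: $(a,It)^{[q]}\R(I)=(a^q,I^{[q]}t^q)\R(I)$, whose degree-$n$ component is $a^qI^n$ for $n<q$ and $a^qI^n+I^{[q]}I^{n-q}$ for $n\geq q$ --- not $a^qI^n+I^{[q]}I^{n-1}$. With the correct decomposition the leading term $q^2e_0(J)$ arises as a sum of roughly $q$ graded pieces each of length roughly $q\,e_0(J)$; your accounting, which makes the total of order $\ell_R(R/a^qR)+O(q)=O(q)$, is inconsistent with the conclusion $e_{HK}=e_0(J)\neq 0$, since $e_{HK}$ is the coefficient of $q^{\dim\R(I)}=q^2$.

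The route you gesture at only in your final sentence is the one the paper actually takes, and it needs two ingredients you omit. After passing to an infinite residue field, one takes minimal reductions $(x)$ of $J$ \emph{and} $(y)$ of $I$, and shows $((J,It)\R(I))^*=((x,yt)\R(I))^*$ by applying the generalized Brian\c{c}on--Skoda theorem (tight closure equals integral closure for principal ideals) to $(x)\R(I)$ and $(yt)\R(I)$ separately, together with the fact that reductions have the same integral closure and the tight-closure-of-a-sum identity. The essential point you miss is that $It$ must also be replaced by $yt$, so that $x,yt$ becomes a system of parameters of $\R(I)_{(\mm,It)}$; then $e_{HK}=e_0$ for parameter ideals, and finally $e_0((x,yt)\R(I))=e_0((J,It)\R(I))=e_0(J)$ is supplied by the known Hilbert--Samuel multiplicity formula for ideals of the Rees algebra. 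Without the reduction of the degree-one part and without these last two facts, the argument does not close.
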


\begin{proof}
	{If $R$ does not have an infinite residue field, we pass to a general extension $S=R[x]_{\mm[x]}$ of $R$, where $x$ is an indeterminate. Note that $S$ has an infinite residue field. Since $S/R$ is a faithfully flat extension, it follows that for all $q=p^e$, $e \in \NN$,
	\[ \ell\left( \frac{\R(I)}{(J,It)^{[q]}\R(I)} \right) 
	= \ell\left( \frac{\R(I)\otimes_R S}{(J,It)^{[q]}(\R(I) \otimes_R S)} \right) 
	= \ell\left( \frac{\R(I \otimes_R S)}{(J,It)^{[q]}\R(I \otimes_R S)} \right). \]
	Hence, $e_{HK}((J,It)\R(I)) = e_{HK}((J,It)\R(I \otimes_R S)).$
	}
	
	Therefore, we may assume that $R$ has an infinite residue field. Let $(x)$, $(y)$ be  minimal reductions of $J$ and $I$ respectively in $R.$ We claim that $e_{HK}((J,It)\R(I))=e_{HK}((x,yt)\R(I)).$  Note that it is sufficient to show that $((J,It)\R(I))^*=((x,yt)\R(I))^*.$ Using generalized Brian\c{c}on-Skoda theorem, it follows that $((x)\R(I))^* = \overline{(x)\R(I)}$ and $((yt)\R(I))^* = \overline{(yt)\R(I)}.$ As $(x)\R(I)$ and  $(yt)\R(I)$ are reductions of $J\R(I)$ and $(It)\R(I)$ respectively, we have
	\begin{align*}
		((x)\R(I))^* = \overline{(x)\R(I)} = \overline{J\R(I)} \ \text{ and } \  ((yt)\R(I))^* = \overline{(yt)\R(I)}=\overline{(It)\R(I)}.
	\end{align*}
	Since $\overline{J\R(I)} = ((x)\R(I))^* \subseteq (J\R(I))^*$, we get $((x)\R(I))^* = (J\R(I))^*.$ Similarly, $((yt)\R(I))^* = ((It)\R(I))^*.$ Consider
	\begin{align*}
		((x,yt)\R(I))^* = \big( ((x)\R(I))^* + ((yt)\R(I))^* \big)^* = \big( (J\R(I))^* + ((It)\R(I))^* \big)^* = ((J,It)\R(I))^*.
	\end{align*}
	This proves the claim. Therefore, $e_{HK}((J,It)\R(I))=e_{HK}((x,yt)\R(I)).$ As $x,yt$ form a system of parameters in $\R(I)_{(\mm,It)}$, it follows that 
	\begin{align*}
		e_{HK}((J,It)\R(I)) = e_{HK}((x,yt)\R(I)) = e_0((x,yt)\R(I)) = e_0((J,It)\R(I)) = e_0(J), 
	\end{align*}
	where the last equality follows from \cite[Theorem 3.1]{jkvRees}.
\end{proof}

Let $I$ be an $\mm$-primary ideal of $R.$ We prove that $\ell_R(\R(I)/(I,It)^{[p^e]})$ is a quasi-polynomial in $e$, for large $e.$

\begin{Theorem} \label{dim1}
	Let $(R,\mm)$ be a 1-dimensional Noetherian local ring with prime characteristic $p>0.$ Let $I$ be an $\mm$-primary ideal. Let $r$ be the reduction number of $I$ and $\rho$ be the postulation number of $I.$ Put $\mathcal{I}=(I,It)\R(I).$ Let $q=p^e$, where $e \in \NN$ is large. 
	\begin{enumerate}[{\rm (1)}]
		\item If $\rho+1 \leq r$, then
		\[ \ell_R\left(\frac{\R(I)}{\mathcal{I}^{[q]}}\right) = e_0(I) q^2 - e_0(I) \binom{r}{2} + e_1(I) r +  \sum_{n=0}^{r-1} \ell_R \left(\frac{R}{I^n}\right) + 2 \sum_{n=0}^{r-1} \alpha_I(I^n,e). \]

		\item If $r < \rho+1$, then
		\[ \ell_R\left(\frac{\R(I)}{\mathcal{I}^{[q]}}\right) = e_0(I) q^2 - e_0(I) \left( r(r-1) - \frac{\rho(\rho+1)}{2} \right) + (2r-\rho-1)e_1(I) + \beta + 2 \sum_{n=0}^{r-1} \alpha_I(I^n,e), \] 
	\end{enumerate}
	where $\beta = \sum_{n=0}^{r-1} \ell_R\left(\frac{R}{I^n}\right) - \sum_{n=r}^{\rho} \ell_R\left(\frac{R}{I^n}\right)$ is a constant and $\alpha_I(I^n,e) = \ell_R(I^n/I^{[q]}I^n) - e_0(I)q$ is a periodic function in $e$, for large $e.$
	In other words, $\ell_R(\R(I)/\mathcal{I}^{[q]})$ is a quasi-polynomial for large $e.$
\end{Theorem}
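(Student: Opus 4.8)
The plan is to compute $\ell_R(\R(I)/\mathcal{I}^{[q]})$ by exploiting the $\NN$-grading of $\R(I) = \bigoplus_{n \geq 0} I^n t^n$. Since $\mathcal{I} = (I, It)\R(I)$ is generated in a way compatible with the grading, the Frobenius power $\mathcal{I}^{[q]}$ is also a graded ideal, and in degree $n$ one should get $(\mathcal{I}^{[q]})_n = I^{[q]}I^n + I^{[q]}I^{n-1} \cdot (\text{the }It\text{ part}) = (I^{[q]} + \text{shifted contributions})$ intersected appropriately with $I^n t^n$. Concretely, I would first identify $(\R(I)/\mathcal{I}^{[q]})_n$ as an $R$-module: the degree-$n$ component of $\mathcal{I}^{[q]}$ should be $I^{[q]}I^{n} + I^{n+1}$ inside $I^n$ (the first term from the generators $I \subseteq \mathcal{I}$ raised to the $q$-th power, landing in degree $n$ after multiplying by $I^{n}$; the second from $(It)^{[q]}$ contributing $I^{[q]}t^q$ in degree $q$, so after multiplying by $\R(I)$ it contributes $I^{[q]}I^{n-q}$ in degree $n \geq q$ — I need to be careful here). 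So the first key step is the bookkeeping: write $(\mathcal{I}^{[q]})_n$ exactly, for each $n$, in terms of $I^{[q]}I^n$, $I^{[q]}I^{n-q}$ (when $n \geq q$), and ordinary powers $I^{n+1}, I^{n}$. This is where the reduction number $r$ enters: once $n \geq r$ we have $I^{n+1} = (x)I^n$ for a minimal reduction $(x)$, which collapses the higher-degree contributions.

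Having pinned down each graded piece, the second step is to sum $\ell_R$ over $n$. The total length splits into three regimes: (i) small $n$ (roughly $0 \leq n < r$), where $(\R(I)/\mathcal{I}^{[q]})_n \cong I^n/(I^{[q]}I^n + I^{n+1})$ — wait, more carefully $I^n / (I^{[q]}I^n)$ modified by the $It$-contribution, but for small $n$ the $t$-part doesn't reach, so these contribute $\sum_{n=0}^{r-1} \ell_R(I^n / I^{[q]}I^n)$ minus corrections, and by definition $\ell_R(I^n/I^{[q]}I^n) = e_0(I)q + \alpha_I(I^n, e)$; (ii) the middle range $r \leq n$ where both $I$ and $It$ contributions are active and $I^{n+1} = xI^n$, $I^{[q]}I^{n-q}$ kicks in once $n \geq q$; the module $I^n/(xI^n + I^{[q]}I^{n-q})$ needs evaluation, and for $n$ in the range $[r, q)$ there is no $t$-contribution yet so it's $I^n/(xI^n)$-type which has length $\ell_R(R/I^{?})$-flavoured terms; (iii) large $n \geq q + r$ or so, where the module vanishes because $I^{[q]} + (x) \supseteq I^q \supseteq$ enough. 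The sum over the "vanishing tail" is what forces only finitely many nonzero terms, giving a finite total. The $q^2$ term of the answer should emerge from the diagonal-type count $\sum_{n} \ell_R(\text{roughly } R/(x^{\lceil n/q \rceil}, \dots))$ contributing on the order of $q \cdot (e_0(I) q)$.

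The third step is to convert the resulting finite sums into the closed forms in (1) and (2). Here I expect to use: the Hilbert–Samuel polynomial $\ell_R(R/I^n) = e_0(I)n - e_1(I)$ for $n > \rho$, summed via $\sum_{n=a}^{b}(e_0(I)n - e_1(I))$; the telescoping identity $\sum_{n=0}^{r-1}\ell_R(R/I^{n+1}) - \sum = \dots$; and the split into cases $\rho + 1 \leq r$ versus $r < \rho + 1$ precisely because the formula $\ell_R(R/I^n) = P_I(n)$ is valid only for $n > \rho$, so whether the "transition point" $r$ lies before or after $\rho$ changes which terms are polynomial and which are the actual lengths $\ell_R(R/I^n)$. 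The constant $\beta = \sum_{n=0}^{r-1}\ell_R(R/I^n) - \sum_{n=r}^{\rho}\ell_R(R/I^n)$ is exactly the discrepancy that appears when $r < \rho + 1$, accounting for the range $[r, \rho]$ where $\ell_R(R/I^n) \neq P_I(n)$.

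The main obstacle will be step one: correctly computing the degree-$n$ component of $\mathcal{I}^{[q]}$. The subtlety is that $\mathcal{I}^{[q]}$ is generated by $\{f^q : f \in \mathcal{I}\}$ as an ideal of $\R(I)$, not just by $q$-th powers of a fixed generating set in the naive sense — but since $\mathcal{I} = I\R(I) + It\R(I)$ and $I\R(I)$, $It\R(I)$ are themselves generated by finitely many homogeneous elements, $\mathcal{I}^{[q]}$ is generated by $\{a^q : a \in I\} \cup \{(at)^q : a \in I\}$ over $\R(I)$, i.e. by $I^{[q]}$ (in degree $0$) and $I^{[q]}t^q$ (in degree $q$). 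Then $(\mathcal{I}^{[q]})_n = I^{[q]}I^n + I^{[q]}I^{n-q}$ (the second term only for $n \geq q$), as submodules of $I^n$. Verifying this and then tracking where $I^{[q]}I^n$ versus $I^{[q]}I^{n-q}$ dominates — and using $r$ to replace $I^{n}$ by reduction-controlled expressions — is the delicate combinatorial heart; everything after is summation bookkeeping, splitting at $n = r$, $n = \rho$, and $n = q$, and invoking the Hilbert–Samuel polynomial together with the periodicity of $\alpha_I(I^n, e)$ (itself a consequence of Monsky's one-dimensional theorem applied to the module $I^n$).
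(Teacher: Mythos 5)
Your proposal follows essentially the same route as the paper: identify the graded components of $\mathcal{I}^{[q]}$ as $I^{[q]}I^n$ for $n<q$ and $I^{[q]}I^{n-q}$ for $n\geq q$, use a minimal reduction $(x)$ to get $I^{[q]}I^m=I^{m+q}$ for $m\geq r$ (so the components vanish for $n\geq q+r$), apply Monsky's one-dimensional theorem to the modules $I^n$ for $0\leq n\leq r-1$, and sum the remaining terms via the Hilbert--Samuel polynomial with the case split governed by whether $r$ or $\rho+1$ is larger. The stray $I^{n+1}$ and $xI^n$ terms you float in the middle regime are spurious --- no such terms occur in the graded pieces, and the reduction enters only through the identity $I^{[q]}I^m=I^{m+q}$ --- but your final-paragraph identification of $(\mathcal{I}^{[q]})_n$ is the correct one and the remaining bookkeeping is exactly what the paper carries out.
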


\begin{proof}
	{If $R$ does not have an infinite residue field, we pass to a general extension $S=R[x]_{\mm[x]}$ of $R$, where $x$ is an indeterminate. Note that $S$ has an infinite residue field. Since $S/R$ is a faithfully flat extension, it follows that for all $q=p^e$, $e \in \NN$,
	\[ \ell\left( \frac{\R(I)}{(I,It)^{[q]}\R(I)} \right) 
	= \ell\left( \frac{\R(I)\otimes_R S}{(I,It)^{[q]}(\R(I) \otimes_R S)} \right) 
	= \ell\left( \frac{\R(I \otimes_R S)}{(I,It)^{[q]}\R(I \otimes_R S)} \right). \]
	}
	
	Therefore, we may assume that $R$ has an infinite residue field. Fix $q=p^e$ large. Observe that 
	\begin{align*}
	\mathcal{I}^{[q]} 
	= (I^{[q]}, I^{[q]}t^q) 
	= \left(\bigoplus_{n=0}^{q-1} I^{[q]}I^nt^n \right) + \left(\bigoplus_{n \geq q} I^{[q]}I^{n-q} t^n \right). 
	\end{align*}
	Let $(x)$ be a minimal reduction of $I$ and let $r$ be the reduction number of $I.$ Then $x^kI^l = I^{k+l}$ for all $k \geq 1$ and $l \geq r.$ Write $I=x+J$, for some ideal $J \subseteq I.$ Then $I^{[q]}I^{n-q} = (x^q + J^{[q]})I^{n-q} = I^n$, for all $n \geq q+r.$ Therefore,
	\begin{align*}
	\mathcal{I}^{[q]} 
	= \left(\bigoplus_{n=0}^{q-1} I^{[q]}I^nt^n \right) + \left(\bigoplus_{n=q}^{q+r-1}  I^{[q]}I^{n-q}t^n \right) + \left(\bigoplus_{n \geq q+r}I^nt^n \right)
	\end{align*}
	and hence
	\begin{align*}
	\ell_R \left(\frac{\mathcal{R}(I)}{\mathcal{I}^{[q]}}\right)
	&= \sum_{n=0}^{r-1} \ell_R \left(\frac{I^n}{I^{[q]}I^n}\right) 
	+ \sum_{n=r}^{q-1} \ \ell_R \left(\frac{I^n}{I^{[q]}I^n}\right)
	+ \sum_{n=q}^{q+r-1} \ell_R \left(\frac{I^n}{I^{[q]}I^{n-q}} \right).
	\end{align*}
	{Writing $\ell_R(I^n/I^{[q]}I^n) = \ell_R(R/I^{[q]}I^n) - \ell_R(R/I^n)$, we get}
	\begin{align*}
	{\ell_R \left(\frac{\mathcal{R}(I)}{\mathcal{I}^{[q]}}\right)}
	&{= \sum_{n=0}^{r-1} \ell_R \left(\frac{I^n}{I^{[q]}I^n}\right) + \sum_{n=r}^{q-1} \ell_R \left(\frac{R}{I^{[q]}I^n}\right) 
	+ \sum_{n=q}^{q+r-1} \ell_R \left(\frac{R}{I^{[q]}I^{n-q}}\right) 
	- \sum_{n=r}^{q+r-1} \ell_R \left(\frac{R}{I^n}\right)} \\
	&{= \sum_{n=0}^{r-1} \ell_R \left(\frac{I^n}{I^{[q]}I^n}\right) + \sum_{n=r}^{q-1} \ell_R \left(\frac{R}{I^{[q]}I^n}\right) 
	+ \sum_{n=0}^{r-1} \ell_R \left(\frac{R}{I^{[q]}I^n}\right)
	- \sum_{n=r}^{q+r-1} \ell_R \left(\frac{R}{I^n}\right)} \\	
	&= 2\sum_{n=0}^{r-1} \ell_R \left(\frac{I^n}{I^{[q]}I^n}\right) + \sum_{n=r}^{q-1} \ell_R \left(\frac{R}{I^{[q]}I^n}\right) + \sum_{n=0}^{r-1} \ell_R \left(\frac{R}{I^n}\right) - \sum_{n=r}^{q+r-1} \ell_R \left(\frac{R}{I^n}\right).
	\end{align*}
	For $n \geq r$, $I^{[q]}I^n = (x^q + J^{[q]})I^n = I^{n+q}.$ If the given ring is not complete, we may pass to its $\mm$-adic completion $\hat{R}$ and use the fact that
	$\hat{R}$ is a faithfully flat extension of $R.$ Now use  \cite[Theorem 3.11]{monsky},  to  write $\ell_R(I^n/I^{[q]}I^n) = e_0(I,I^n)q + \alpha_I(I^n,e)$, where $\alpha_I(I^n,e)$ is a periodic function of $e.$ Using the associativity formula for $e_0(I)$, one may check that $e_0(I,I^n) = e_0(I,R).$ Thus,
	\begin{align*}
	\ell_R \left(\frac{\mathcal{R}(I)}{\mathcal{I}^{[q]}}\right)
	&= 2\sum_{n=0}^{r-1} \big( e_0(I)q + \alpha_I(I^n,e) \big) + \sum_{n=r}^{q-1} \ell_R \left(\frac{R}{I^{n+q}}\right) + \sum_{n=0}^{r-1} \ell_R \left(\frac{R}{I^n}\right) - \sum_{n=r}^{q+r-1} \ell_R \left(\frac{R}{I^n}\right). 
	\end{align*}
	{Since evaluation of the expression $\sum_{n=r}^{q+r-1} \ell_R \left(\frac{R}{I^n}\right)$ depends on the relation between $r$ and $\rho$, we have the following cases:}
	
	{\bf Case 1}: Let $\rho + 1 \leq r.$ Then
	{$\ell_R(R/I^n) = e_0(I)n - e_1(I)$, for all $n \geq r.$ Hence,}
	\begin{align*}
	\ell_R \left(\frac{\mathcal{R}(I)}{\mathcal{I}^{[q]}}\right)
	&= 2rq \ e_0(I) + 2 \sum_{n=0}^{r-1} \alpha_I(I^n,e) + \sum_{n=r+q}^{2q-1} \big( e_0(I)n-e_1(I) \big) + \sum_{n=0}^{r-1} \ell_R \left(\frac{R}{I^n}\right)- \sum_{n=r}^{q+r-1} \big( e_0(I)n-e_1(I) \big) \\
	&= 2rq \ e_0(I) + 2 \sum_{n=0}^{r-1} \alpha_I(I^n,e) + e_0(I) \left[\binom{2q}{2} - 2\binom{r+q}{2} + \binom{r}{2} \right]  + e_1(I) r + \sum_{n=0}^{r-1} \ell_R \left(\frac{R}{I^n}\right).
	\end{align*}
	{Here, we use the fact that for any $a,b \in \mathbb{N},$ $\sum_{a}^{b-1} n  = \binom{b}{2} - \binom{a}{2}$. Since $2rq + \binom{2q}{2} - 2\binom{r+q}{2} + \binom{r}{2} = q^2 - \binom{r}{2}$, we get}
	\begin{align*}
	\ell_R \left(\frac{\mathcal{R}(I)}{\mathcal{I}^{[q]}}\right)
	= e_0(I) q^2 - e_0(I) \binom{r}{2} + e_1(I) r +  \sum_{n=0}^{r-1} \ell_R \left(\frac{R}{I^n}\right) + 2 \sum_{n=0}^{r-1} \alpha_I(I^n,e).
	\end{align*}	
	
	{\bf Case 2}: Let $r < \rho+1.$ Then
	{$\ell_R(R/I^n) = e_0(I)n - e_1(I)$, for all $n \geq \rho+1$ and  $\sum_{n=r}^{\rho} \ell_R(R/I^n)$ is a constant. Hence,}
	\begin{align*}
	\ell_R \left(\frac{\mathcal{R}(I)}{\mathcal{I}^{[q]}}\right)
	&= 2rq \ e_0(I) + 2 \sum_{n=0}^{r-1} \alpha_I(I^n,e) + \sum_{n=r+q}^{2q-1} \big( e_0(I)n-e_1(I) \big) + \sum_{n=0}^{r-1} \ell_R\left(\frac{R}{I^n}\right) - \sum_{n=r}^{\rho} \ell_R \left(\frac{R}{I^n}\right) \\
	&\hspace{10cm}- \sum_{n=\rho+1}^{q+r-1} \big( e_0(I)n-e_1(I) \big).
	\end{align*}
	{Rearranging the terms, we get}
	\begin{align*}
	\ell_R \left(\frac{\mathcal{R}(I)}{\mathcal{I}^{[q]}}\right)
	&= 2rq \ e_0(I) + 2 \sum_{n=0}^{r-1} \alpha_I(I^n,e) + e_0(I) \left[\binom{2q}{2} - 2\binom{r+q}{2} + \binom{\rho+1}{2} \right]  + (2r-\rho-1)e_1(I) + \beta \\
	&= e_0(I) q^2 - e_0(I) \left( r(r-1) - \frac{\rho(\rho+1)}{2} \right) + (2r-\rho-1)e_1(I) + \beta + 2 \sum_{n=0}^{r-1} \alpha_I(I^n,e),
	\end{align*}
	where $\beta = \sum_{n=0}^{r-1} \ell_R\left(\frac{R}{I^n}\right) - \sum_{n=r}^{\rho} \ell_R\left(\frac{R}{I^n}\right)$ is a constant. Let $N$ be a common period of $\alpha_I(I^n,e)$, for $0 \leq n \leq r-1.$ Then it follows that $\ell_R(\R(I)/\mathcal{I}^{[q]}) - e_0(I)q^2$ is a periodic function of $e.$ In other words, $\ell_R(\R(I)/\mathcal{I}^{[q]})$ is a quasi-polynomial in $e$, for large $e.$
\end{proof}

In particular, if $R$ is a $1$-dimensional Cohen-Macaulay local ring we get the following result.

\begin{Corollary}  \label{cordim1}
	Let $(R,\mm)$ be a 1-dimensional Cohen-Macaulay local ring with prime characteristic $p>0.$ Let $I$ be an $\mm$-primary ideal. Let $r$ be the reduction number of $I.$ Put $\mathcal{I}=(I,It)\R(I).$ Then for $q=p^e$, where $e \in \NN$ is large,
	\[ \ell_R\left(\frac{\R(I)}{\mathcal{I}^{[q]}}\right) = e_0(I) q^2 - e_0(I) \binom{r}{2} + e_1(I) r + \sum_{n=0}^{r-1} \ell_R\left(\frac{R}{I^n}\right) + 2 \sum_{n=0}^{r-1} \alpha_I(I^n,e), \] 
	where $\alpha_I(I^n,e) = \ell_R(I^n/I^{[q]}I^n) - e_0(I)q$ is a periodic function in $e$, for large $e.$
	In other words, $\ell_R(\R(I)/\mathcal{I}^{[q]})$ is a quasi-polynomial for large $e.$
\end{Corollary}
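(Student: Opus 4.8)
\emph{Proof proposal.} The plan is to deduce this directly from Case~(1) of Theorem~\ref{dim1}; everything rests on showing that in a one-dimensional Cohen--Macaulay local ring the postulation number $\rho$ of $I$ satisfies $\rho + 1 \le r$, so that the second case of that theorem cannot occur. Exactly as in the proof of Theorem~\ref{dim1}, we may pass to a general extension to assume $R/\mm$ is infinite, so that $I$ admits a minimal reduction $(x)$ with $r_{(x)}(I) = r$; since $R$ is Cohen--Macaulay of dimension one, $x$ is a nonzerodivisor, and $\ell_R(R/xR) = e_0((x),R) = e_0(I)$ because $(x)$ is a reduction of $I$.

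First I would record the numerical input. For $n > r$ we have $I^n = xI^{n-1}$. Using that $x$ is a nonzerodivisor,
\[ \ell_R(R/xI^{n-1}) = \ell_R(R/xR) + \ell_R(xR/xI^{n-1}) = e_0(I) + \ell_R(R/I^{n-1}), \]
while also $\ell_R(R/xI^{n-1}) = \ell_R(R/I^{n-1}) + \ell_R(I^{n-1}/xI^{n-1})$; comparing, $\ell_R(I^{n-1}/I^n) = \ell_R(I^{n-1}/xI^{n-1}) = e_0(I)$ for all $n > r$. Hence $H_I(n) = H_I(n-1) + e_0(I)$ for $n > r$, so $H_I(n) = e_0(I)(n-r) + \ell_R(R/I^r)$ agrees with a linear polynomial in $n$ for every $n \ge r$. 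Consequently $e_1(I) = e_0(I)r - \ell_R(R/I^r)$ and $\rho = n(I) \le r-1$, i.e.\ $\rho + 1 \le r$. (Alternatively one may simply invoke the well-known bound $n(I) \le r(I) - 1$ for one-dimensional Cohen--Macaulay rings, obtained from the fact that the numerator of the Hilbert series $\sum_{n\ge 0}\ell_R(R/I^n)t^n$ has degree at most $r$.)

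With $\rho + 1 \le r$ in hand, Case~(1) of Theorem~\ref{dim1} applies verbatim and yields precisely the asserted formula
\[ \ell_R\left(\frac{\R(I)}{\mathcal{I}^{[q]}}\right) = e_0(I) q^2 - e_0(I) \binom{r}{2} + e_1(I) r + \sum_{n=0}^{r-1} \ell_R\left(\frac{R}{I^n}\right) + 2 \sum_{n=0}^{r-1} \alpha_I(I^n,e), \]
and the quasi-polynomial statement is immediate from the periodicity of each $\alpha_I(I^n,e)$ already established in Theorem~\ref{dim1}. I expect no real obstacle here: the entire content is the elementary remark that a one-dimensional Cohen--Macaulay Hilbert--Samuel function is linear from the reduction number onward, and the only points needing a word of care are the faithfully flat reductions to an infinite residue field and, if necessary, to a complete ring, both of which were already handled in the proof of Theorem~\ref{dim1}.
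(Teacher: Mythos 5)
Your proposal is correct and follows essentially the same route as the paper: both reduce the corollary to Case~(1) of Theorem~\ref{dim1} by verifying $\rho+1\le r$. The only difference is that the paper simply cites Marley's thesis for the equality $\rho = r-1$ in the one-dimensional Cohen--Macaulay case, whereas you prove the (sufficient) inequality $\rho\le r-1$ directly from $I^{n}=xI^{n-1}$ for $n>r$ and the length count $\ell_R(I^{n-1}/xI^{n-1})=e_0(I)$ --- a correct and self-contained substitute for that citation.
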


\begin{proof}
	Since $R$ is Cohen-Macaulay, using \cite[Theorem 2.15]{marleyThesis} it follows that the postulation number of $I$, $\rho = r - 1.$ Substitute the same in Theorem \ref{dim1} to conclude.
\end{proof}

Next, we calculate the Hilbert-Kunz function of the ideal $\mathcal{J}=(J,It)\R(I)$ when $I$ is a parameter ideal and $J$ is an $\mm$-primary ideal of a Cohen-Macaulay ring $R.$

\begin{Theorem} \label{sopdim1}
	Let $(R,\mm)$ be a $1$-dimensional Cohen-Macaulay local ring with prime characteristic $p>0.$ Let $I=(a)$ be a parameter ideal and $J$ be an $\mm$-primary ideal. Put 
	$\mathcal{J}=(J,It)\R(I).$ Then for $q=p^e$, where $e \in \NN$ is large, 
	\[ \ell_R\left(\frac{\R(I)}{\mathcal{J}^{[q]}}\right) = q^2e_0(J) + q \ \alpha_J(e),\]
	where $\alpha_J(e) = \ell_R(R/J^{[q]}) - e_0(J)q$ is a periodic function of $e.$
\end{Theorem}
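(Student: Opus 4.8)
The plan is to exploit the fact that a parameter ideal in a one-dimensional Cohen--Macaulay ring is principal and generated by a nonzerodivisor, which turns the Rees algebra $\R(I)$ into a genuine polynomial ring over $R$; the computation of $\ell_R(\R(I)/\mathcal{J}^{[q]})$ then collapses onto the ordinary Hilbert--Kunz function of $J$ in $R$.

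First I would observe that $a$ is a nonzerodivisor: since $R$ is Cohen--Macaulay, $\Ass R=\Min R$, and a parameter lies in no minimal prime. Hence each $I^n=(a^n)$ is free of rank one over $R$ with basis $a^nt^n$, so $\R(I)=\bigoplus_{n\geq 0}Ra^nt^n$ is the polynomial ring $R[at]$, and the $R$-algebra map $R[T]\to\R(I)$, $T\mapsto at$, is an isomorphism. Since $I$ is principal, $(It)\R(I)=(at)\R(I)$, so under this isomorphism $\mathcal{J}=(J,It)\R(I)$ corresponds to $(J,T)R[T]$; and because $q=p^e$, the ideal $\mathcal{J}^{[q]}$ is the ordinary Frobenius power, generated by the $q$-th powers of any generating set, hence corresponds to $(J^{[q]},T^q)R[T]$. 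Therefore
\[ \frac{\R(I)}{\mathcal{J}^{[q]}}\;\cong\;\frac{R[T]}{(J^{[q]},T^q)}\;\cong\;\bigoplus_{i=0}^{q-1}\frac{R}{J^{[q]}}\,T^i \]
as $R$-modules, and counting lengths yields $\ell_R(\R(I)/\mathcal{J}^{[q]})=q\,\ell_R(R/J^{[q]})$ for every $q=p^e$.

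Next I would invoke Monsky's theorem in dimension one to write $\ell_R(R/J^{[q]})=e_{HK}(J)\,q+\alpha_J(e)$ with $\alpha_J(e)$ periodic in $e$ for large $e$ (passing to the $\mm$-adic completion of $R$ by faithful flatness if the reference requires it, exactly as in the proof of Theorem \ref{dim1}). It then remains only to identify $e_{HK}(J)$ with $e_0(J)$, and this drops out of the length identity just established: since $\dim\R(I)=2$,
\[ e_{HK}\bigl((J,It)\R(I)\bigr)=\lim_{e\to\infty}\frac{\ell_R(\R(I)/\mathcal{J}^{[q]})}{q^2}=\lim_{e\to\infty}\frac{\ell_R(R/J^{[q]})}{q}=e_{HK}(J), \]
while Theorem \ref{dim1IJ} already tells us the left-hand side equals $e_0(J)$; hence $e_{HK}(J)=e_0(J)$. (Alternatively, one may pass to an infinite residue field, choose a minimal reduction $(x)$ of $J$, and argue as in the proof of Theorem \ref{dim1IJ} that $J^{*}=(x)^{*}$, so that $e_{HK}(J)=e_{HK}((x))=\ell_R(R/(x))=e_0(J)$, using that $x$ is a nonzerodivisor.) Substituting, for large $e$,
\[ \ell_R\!\left(\frac{\R(I)}{\mathcal{J}^{[q]}}\right)=q\bigl(e_0(J)\,q+\alpha_J(e)\bigr)=q^2e_0(J)+q\,\alpha_J(e), \]
with $\alpha_J(e)=\ell_R(R/J^{[q]})-e_0(J)\,q$ periodic, as claimed.

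I do not anticipate a serious obstacle: the only place requiring genuine care is the bookkeeping that identifies $\mathcal{J}^{[q]}$ with $(J^{[q]},(at)^q)\R(I)$ and, correspondingly, $\R(I)$ with the one-variable polynomial ring $R[at]$. Once this is in place, the conclusion follows from the module decomposition above together with Monsky's one-dimensional result and the previously established Theorem \ref{dim1IJ}.
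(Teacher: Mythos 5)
Your proposal is correct and is essentially the paper's own argument: the paper decomposes $\mathcal{J}^{[q]}$ degreewise as $\bigoplus_{n=0}^{q-1}J^{[q]}(a^n)t^n\oplus\bigoplus_{n\geq q}(a^n)t^n$ and gets $\ell_R(\R(I)/\mathcal{J}^{[q]})=q\,\ell_R(R/J^{[q]})$, which is exactly your $R[T]/(J^{[q]},T^q)$ computation in different notation. Your explicit justification that $e_{HK}(J)=e_0(J)$ (via Theorem \ref{dim1IJ} or a minimal reduction) is a welcome addition that the paper leaves implicit, but it does not change the route.
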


\begin{proof}
	Observe that 
	\[ \mathcal{J}^{[q]} = (J^{[q]},I^{[q]}t^q) 
	= \left(\bigoplus_{n=0}^{q-1} J^{[q]}(a^n)t^n \right) + \left(\bigoplus_{n \geq q} (a^n)t^n \right) \]
	which implies that
	\begin{align*}
	\ell_R\left(\frac{\mathcal{R}(I)}{\mathcal{J}^{[q]}}\right)
	= \sum_{n=0}^{q-1} \ell_R\left(\frac{(a^n)}{J^{[q]}(a^{n})}\right) 
	= q \ \ell_R \left(\frac{R}{J^{[q]}}\right) 
	=e_0(J)q^2 + q \ \alpha_J(e).
	\end{align*}
\end{proof}

We illustrate the above results in the following example.

\begin{Example}{\rm
	Let $R=k[[X,Y]]/(X^5-Y^5)$, where $k$ is a field of prime characteristic $p\equiv \pm 2 (\mod 5).$ Let $\mm$ be the maximal ideal of $R.$ Put $q=p^e$ for some $e \in \NN.$ Monsky \cite{monsky} showed that for large $e$, $\ell_R(R/\mm^{[q]}) = 5q+\alpha_{\mm}(e)$, where $\alpha_{\mm}(e)=-4$ when $e$ is even and $\alpha_{\mm}(e)=-6$ when $e$ is odd. 
	
	(1) We first calculate $\ell_R(\R(\mm)/(\mm,\mm t)^{[q]}).$ It is easy to check that $e_0(\mm)=5$ and $e_1(\mm)=10.$ Let $I=(x)$, where $x$ denotes the image of $X$ in $R.$ Then $I$ is a minimal reduction of $\mm$ and $r(\mm)=4.$ Using Corollary \ref{cordim1}, it follows that
	\begin{align} \label{eqex}
	\ell_R\left(\frac{\R(\mm)}{(\mm,\mm t)^{[q]}}\right) 
	= 5q^2 + 10 + \sum_{n=0}^{3} \ell_R\left(\frac{R}{\mm^n}\right) + 2 \sum_{n=0}^{3} \alpha_{\mm}(\mm^n,e) 
	= 5q^2 + 20 + 2 \sum_{n=0}^{3} \alpha_{\mm}(\mm^n,e).
	\end{align}
	{For an $R$-ideal $J$, let $\mu(J)$ denote the minimal number of generators of $J.$}
	For $n=1,2,3,$ writing 
	\begin{align*}
	\ell_R\left(\frac{\mm^n}{\mm^{[q]}\mm^n}\right) = \ell_R\left(\frac{\mm^{[q]}}{\mm^{[q]}\mm^n}\right) - \ell_R\left(\frac{R}{\mm^n}\right) + \ell_R\left(\frac{R}{\mm^{[q]}}\right)
	= \sum_{i=0}^{n-1} \mu(\mm^{[q]}\mm^i) - \ell_R\left(\frac{R}{\mm^n}\right) + \ell_R\left(\frac{R}{\mm^{[q]}}\right),
	\end{align*}
	we obtain
	\begin{center}
	\begin{minipage}{.30 \columnwidth}
		\begin{align*}
		\alpha_{\mm}(\mm,e) = 
		\begin{cases}
		-3 & \text{if } e \text{ is even} \\
		-5 & \text{if } e \text{ is odd},
		\end{cases}
		\end{align*}
	\end{minipage}
	\begin{minipage}{.40 \columnwidth}
		\begin{align*}
		\alpha_{\mm}(\mm^2,e) = 
		\begin{cases}
		-2 & \text{if } e \text{ is even} \\
		-3 & \text{if } e \text{ is odd},
		\end{cases}
		\end{align*}
	\end{minipage}
	\begin{minipage}{.20 \columnwidth}
		\begin{align*}
		\alpha_{\mm}(\mm^3,e) = -1.
		\end{align*}
	\end{minipage}
	\end{center}

	\vspace{5pt}
	Substituting in \eqref{eqex}, we get $\ell_R(\R(\mm)/(\mm,\mm t)^{[q]}) = 5q^2 + \beta_e$, where $\beta_e=0$ when $e$ is even and $\beta_e=-10$ when $e$ is odd.
	
	(2) We now calculate $\ell_R(\R(I)/(\mm,It)^{[q]}).$ Using Theorem \ref{sopdim1}, we get 
	\begin{align*} 
	\ell_R\left(\frac{\R(I)}{(\mm,It)^{[q]}}\right) 
	= e_0(\mm) q^2 + q \ \alpha_{\mm}(e) 
	= 5q^2 + 
	\begin{cases}
	-4q & \text{if } e \text{ is even}  \\
	-6q & \text{if } e \text{ is odd}.
	\end{cases}
	\end{align*}
	One can also verify this using the following arguments. Observe that $\R(I) \simeq k[[X,Y]][Z]/(X^5-Y^5).$ In order to find $\ell(k[X,Y,Z]/(X^5-Y^5,X^q,Y^q,Z^q))$, we find the Gr\"{o}bner basis of the ideal $M_q=(X^5-Y^5,X^q,Y^q,Z^q)$ in $k[X,Y,Z].$ Let `$>$' be any monomial ordering on $k[X,Y,Z]$ with $X>Y>Z.$ Since $q$ is large, the $S$-polynomials are:
	\begin{align*}
	S(X^5-Y^5,X^q) = X^{q-5}Y^5, && S(X^5-Y^5,X^{q-5}Y^5) = X^{q-10}Y^{10}, \ldots
	\end{align*}
	Therefore, Gr\"{o}bner basis of $M_q$ is $G = \{X^5-Y^5,X^q,Y^q,Z^q,X^{q-5i}Y^{5i} | q-5i>0\}.$ Observe that 
	\begin{align*}
	\ell_R\left(\frac{\R(I)}{(\mm,It)^{[q]}}\right) 
	= \ell\left(\frac{k[X,Y,Z]}{\inn_{>}(M_q)}\right) 
	= \ell\left(\frac{k[X,Y,Z]}{(X^5,Y^q,Z^q,X^{q-5i}Y^{5i} \mid q-5i<5)}\right).
	\end{align*}
	We now explore the condition $q-5i$ such that $q-5i<5.$ Since $q$ is of the form $(5k+2)^e$ or $(5k+3)^e$ for some $k,e \in \NN$, it is sufficient to find the values $(5k+2)^e$ modulo 5 and $(5k+3)^e$ modulo 5. First, consider the case $(5k+2)^e$ modulo 5. Using the binomial theorem, we only need to find $2^e (\mod 5).$ 
	
	If $e$ is even, then $2^e \equiv 1,4 (\mod 5).$ This implies that $X^{q-5i}Y^{5i} = XY^{q-1}$ or $X^{q-5i}Y^{5i} = X^4Y^{q-4}.$ In either of the case, we get $\ell_R(\R(I)/(\mm,It)^{[q]}) = 5q^2 - 4q.$ 
	
	If $e$ is odd, then $2^e \equiv 2,3 (\mod 5).$ This implies that $X^{q-5i}Y^{5i} = X^2Y^{q-2}$ or $X^{q-5i}Y^{5i} = X^3Y^{q-3}.$ In either of the case, $\ell_R(\R(I)/(\mm,It)^{[q]}) = 5q^2 - 6q.$ We get the same conclusion in the case $(5k+3)^e$ modulo $5.$
}\end{Example}

\section{The Hilbert-Kunz function in dimension $\geq 2.$}

In this section, we find the generalized Hilbert-Kunz function of the ideal $(I,It)\R(I)$, where $I$ is a parameter ideal in a Cohen-Macaulay local ring. It turns out that in this case, the generalized Hilbert-Kunz function is eventually a polynomial.

Let $R$ be a Cohen-Macaulay Stanley-Reisner ring of a simplicial complex over an infinite field with prime characteristic $p>0.$ Let $\mm$ be the maximal homogeneous ideal of $R$ and $I$ be an ideal generated by a linear system of parameters. It is proved in \cite[Theorem 6.1]{gmv} that $\mm=I^*.$ Therefore, using \cite[Corollary 4.5]{etoYoshida} we get $e_{HK}((\mm,\mm t)\R(\mm)) = e_{HK}((I,It)\R(I)).$ Thus in order to calculate $e_{HK}(\R(\mm)),$ it is sufficient to calculate $e_{HK}((I,It)\R(I)).$ This observation motivated us to consider the following setup.

Let $R$ be a Cohen-Macaulay local ring and $I$ be a parameter ideal. Let $G(I)=\oplus_{n \geq 0}I^n/I^{n+1}$ denote the associated graded ring of $I.$ Fix $s \in \mathbb{N}.$ For a fixed set of generators of $I$, define functions 
\[F(s,n) := H_I(I^{[s]},n) = \ell_R\left( \dfrac{I^{[s]}}{I^{[s]}I^n} \right) \text{ \ and \ } H(n) := H_I(R,n) = \ell_R\left( \dfrac{R}{I^n} \right) = e_0(I) \binom{n+d-1}{d} \] 
for all $n.$ Note that if $R$ is $1$-dimensional, then $F(s,n)=H(n)$ for all $n.$

We shall use the following result to find the reduction number of powers of an $\mm$-primary ideal. 

\begin{Theorem}[{\cite[Corollary 2.21]{marleyThesis}}] \label{Marley}
	Let $(R, \mm)$ be a $d$-dimensional Cohen-Macaulay local ring with infinite residue field and $I$ be an $\mm$-primary ideal such that $\grade(G(I)_+) \geq d - 1.$ Then for $k \geq 1$, 
	\[ r(I^k) = \floor[\frac{n(I)}{k}] + d. \]
\end{Theorem}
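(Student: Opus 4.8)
The natural strategy is to express both the reduction number $r(\cdot)$ and the postulation number $n(\cdot)$ in terms of the associated graded ring and then to track how these invariants transform when $I$ is replaced by $I^k$. The cleanest route is to deduce the formula from its $k=1$ instance, the identity $r(J)=n(J)+d$ valid for every $\mm$-primary ideal $J$ with $\grade(G(J)_+)\ge d-1$ over the Cohen-Macaulay ring $R$ (here the infinite residue field guarantees that $r(J)$ is attained and is independent of the minimal reduction chosen). Applying this identity to $J=I^k$ reduces the theorem to two claims: \emph{(i)} the grade hypothesis is inherited, $\grade(G(I^k)_+)\ge d-1$ for all $k\ge 1$; and \emph{(ii)} the postulation number scales as $n(I^k)=\lfloor n(I)/k\rfloor$. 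Granting these, $r(I^k)=n(I^k)+d=\lfloor n(I)/k\rfloor+d$, which is the assertion.

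For the upper bound in (ii) I would sample the Hilbert function of $I$ along multiples of $k$. Since $\ell_R(R/(I^k)^n)=\ell_R(R/I^{kn})$, we have $H_{I^k}(n)=H_I(kn)$, and comparing the two for large $n$ forces the polynomial identity $P_{I^k}(x)=P_I(kx)$. Hence whenever $kn>n(I)$ we get $H_{I^k}(n)=H_I(kn)=P_I(kn)=P_{I^k}(n)$; as $kn>n(I)$ is equivalent to $n\ge\lfloor n(I)/k\rfloor+1$, this shows $n(I^k)\le\lfloor n(I)/k\rfloor$.

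The reverse inequality in (ii), together with (i), is where the hypotheses do the real work, and I would channel both through the graded local cohomology of $G=G(I)$ supported at $G_+$. The condition $\grade(G_+)\ge d-1$ forces $H^i_{G_+}(G)=0$ for all $i\le d-2$, so the difference $\Delta(n):=P_I(n)-H_I(n)$ is controlled only by $H^{d-1}_{G_+}(G)$ and $H^d_{G_+}(G)$. Using that $R$ is Cohen-Macaulay and $G$ is standard graded, one shows $\Delta$ is sign-coherent: $\Delta(n)\ne 0$ for every $n\le n(I)$ and $\Delta(n)=0$ for $n>n(I)$, i.e. there are no interior degrees at which the Hilbert function meets its polynomial only to separate again. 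Setting $m=\lfloor n(I)/k\rfloor$ we have $km\le n(I)$, whence $\Delta(km)\ne 0$, that is $H_{I^k}(m)=H_I(km)\ne P_I(km)=P_{I^k}(m)$, giving $n(I^k)\ge m$ and the equality in (ii). For (i), I would transfer the vanishing $H^i_{G_+}(G)=0$ $(i\le d-2)$ to $G(I^k)$ by comparing $G(I^k)$ with the $k$-th Veronese of $G(I)$ through the Rees algebra $\R(I)$, whose $k$-th Veronese is $\R(I^k)$, using that graded local cohomology commutes with the Veronese operation degreewise; care is required since $G(I^k)$ is not literally $G(I)^{(k)}$, so one compares them via the filtration $I^{kn}\supseteq I^{kn+1}\supseteq\cdots\supseteq I^{kn+k}$.

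I expect the principal obstacle to be the reverse inequality in (ii), namely the sign-coherence (the ``no interior gaps'' property) of $\Delta$. Proving it rigorously means ruling out cancellation between the two surviving local cohomology modules $H^{d-1}_{G_+}(G)$ and $H^d_{G_+}(G)$ over the range $n\le n(I)$, which is exactly where the combination of $R$ being Cohen-Macaulay and $\grade(G_+)\ge d-1$ is essential; mere positivity of depth does not suffice, as the general one-dimensional situation, with its separate regime $r<n(I)+1$ (occurring precisely when $R$ fails to be Cohen-Macaulay), already indicates. The grade transfer (i) and the polynomial identity $P_{I^k}(x)=P_I(kx)$ are comparatively routine once the local cohomology framework is in place.
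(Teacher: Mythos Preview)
The paper does not supply its own proof of this statement: it is quoted verbatim from Marley's thesis (cited as \cite[Corollary 2.21]{marleyThesis}) and used later as a black box, specifically in Theorem~\ref{dimdCMsop} to determine $r(I^s)$ for a parameter ideal $I$. There is therefore nothing in the paper to compare your proposal against.

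For what it is worth, your outline is in the spirit of Marley's original argument. Reducing to the $k=1$ identity $r(J)=n(J)+d$ and then controlling $n(I^k)$ via $H_{I^k}(n)=H_I(kn)$ and $P_{I^k}(x)=P_I(kx)$ is exactly the right move, and you have correctly isolated the two genuine obligations: the ``no interior gaps'' property of $P_I-H_I$ on $(-\infty,n(I)]$, and the inheritance $\grade(G(I^k)_+)\ge d-1$. Both are indeed where the Cohen--Macaulay and grade hypotheses are consumed. One caution on (i): the comparison between $G(I^k)$ and the $k$-th Veronese of $G(I)$ is more delicate than the Rees-algebra analogy suggests, since $G(I^k)_n=I^{kn}/I^{kn+k}$ while $(G(I)^{(k)})_n=I^{kn}/I^{kn+1}$; Marley handles this by working directly with the $a$-invariant and the identity $r(I)=a(G(I))+d$ rather than by transferring grade wholesale.
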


\begin{Theorem}  \label{F(n)}
	Let $R$ be a $d$-dimensional Cohen-Macaulay local ring and let $I$ be a parameter ideal. Let $d \geq 2.$ For a fixed $s \in \mathbb{N}$,
	\begin{align} \label{expressF(n)}
	F(s,n) = 
	\begin{cases}
	d \ H(n) &  \text{if } 1 \leq n \leq s, \\
	\sum_{i=1}^{d-1} (-1)^{i+1} \binom{d}{i} H(n-(i-1)s) & \text{ if }  s+1 \leq n \leq (d-1)s-1, \\
	H(n+s) - s^d e_0(I) & \text{ if } n \geq (d-1)s.
	\end{cases}
	\end{align}
	{In particular, the function $F(s,n)$ is independent of the choice of generating set of $I.$}
\end{Theorem}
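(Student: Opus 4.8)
The plan is to compute $F(s,n) = \ell_R(I^{[s]}/I^{[s]}I^n)$ by writing $I^{[s]}/I^{[s]}I^n$ as a difference of colengths in $R$ and exploiting that $I$ is generated by a regular sequence $a_1, \ldots, a_d$. First I would observe that $\ell_R(I^{[s]}/I^{[s]}I^n) = \ell_R(R/I^{[s]}I^n) - \ell_R(R/I^{[s]})$, and that $I^{[s]} = (a_1^s, \ldots, a_d^s)$ is itself a parameter ideal, so $\ell_R(R/I^{[s]}) = s^d e_0(I)$ and $\ell_R(R/(I^{[s]})^m) = e_0(I^{[s]})\binom{m+d-1}{d} = s^d e_0(I) \binom{m+d-1}{d}$ for all $m \geq 1$ (no postulation issues, since a parameter ideal in a Cohen-Macaulay ring has $n(I) < 0$). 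The ideal $I^{[s]}I^n$ sits between successive ``Frobenius-like'' powers: for $n$ in the range $[ms, (m{+}1)s]$ one has $(I^{[s]})^{m+1} \subseteq I^{[s]}I^n \subseteq (I^{[s]})^m$ up to multiplication by powers of $I$. The key is to resolve $R/I^{[s]}I^n$ via the Koszul-type short exact sequences coming from the regular sequence, or equivalently to use the Artin--Nagata / linkage identity $\ell_R(R/I^{[s]}I^n) = \ell_R(R/(I^{[s]})^{k})+ (\text{correction terms counting } I^n\text{-torsion})$.

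The cleanest route is probably induction on $d$ using the short exact sequence obtained by cutting with a single generator. Since $a_d$ is a nonzerodivisor modulo $(a_1, \ldots, a_{d-1})$, one gets for the ring $\bar R = R/(a_d^s)$ (which is Cohen--Macaulay of dimension $d-1$ with parameter ideal $\bar I$ the image of $I$, though now with $a_d^s = 0$ one must track the module $\overline{I^{[s]}}$ carefully) a recursion expressing $F_d(s,n)$ in terms of $F_{d-1}(s,n)$ and $F_{d-1}(s,n-s)$, which produces the alternating binomial sum $\sum_i (-1)^{i+1}\binom{d}{i} H(n-(i-1)s)$ by the Pascal/inclusion-exclusion identity $\binom{d}{i} = \binom{d-1}{i} + \binom{d-1}{i-1}$. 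The three ranges in \eqref{expressF(n)} correspond to: (i) $n \leq s$, where $I^{[s]}I^n$ is ``not yet saturated'' and $F(s,n)$ is just $d$ copies of $H(n)$ because $I^{[s]}/I^{[s]}I^n \cong (R/I^n)^{\oplus d}$ once one checks the relations among the $a_j^s$ are all in $I^n$; (ii) the transitional middle range, governed by the truncated alternating sum; and (iii) $n \geq (d-1)s$, where $I^{[s]}I^n = I^{n+s}$ (this follows because $r(I) = d - 1$, since $n(I) \le -1$ for a parameter ideal and Theorem \ref{Marley} gives $r(I) = \lfloor n(I)/1\rfloor + d \le d-1$; more directly $a_j^s I^n \subseteq I^{n+s}$ always and for $n \ge (d-1)s$ one gets equality by the reduction property), so $F(s,n) = \ell_R(R/I^{n+s}) - s^d e_0(I) = H(n+s) - s^d e_0(I)$.

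I would handle the endpoint $n = s$ separately to confirm the first two cases agree there (both should give $d\, H(s)$: the alternating sum at $n = s$ collapses since $H(s - (i-1)s) = H(s(2-i))$ vanishes for $i \geq 3$ and one checks $\binom{d}{1}H(s) - \binom{d}{2}H(0) = d\,H(s)$ as $H(0) = 0$), and similarly check continuity at $n = (d-1)s - 1 \to (d-1)s$, using $H(n+s) - s^d e_0(I)$ against the alternating sum telescoping identity $\sum_{i=0}^{d}(-1)^i \binom{d}{i}\binom{n-(i-1)s+d-1}{d} = \text{(leading terms)}$ — this is where Vandermonde/finite-difference identities for $e_0(I)\binom{x+d-1}{d}$ do the work, since applying the $d$-th finite difference in steps of $s$ to a degree-$d$ polynomial yields $s^d$ times its leading coefficient, i.e. $s^d e_0(I)$. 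The main obstacle I anticipate is the bookkeeping in the inductive step: passing to $R/(a_d^s)$ changes the module from $I^{[s]}$ to its image, and one must carefully identify $\overline{I^{[s]}} \cap (\text{stuff})$ and verify that no spurious torsion appears — concretely, showing $(a_1^s,\ldots,a_{d-1}^s,a_d^s)I^n : a_d^s = (a_1^s,\ldots,a_{d-1}^s)I^n + (a_d^s)$-type colon relations hold, which is exactly where Cohen--Macaulayness (the regular sequence property of the $a_j$ and their powers) is used. The final sentence, that $F(s,n)$ is independent of the generating set, is then immediate: every expression on the right-hand side of \eqref{expressF(n)} is written purely in terms of $H(n) = e_0(I)\binom{n+d-1}{d}$, $s$, $d$, and $e_0(I)$, none of which depend on the chosen generators.
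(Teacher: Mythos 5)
Your treatment of the two outer ranges is essentially the paper's and is correct: for $1 \le n \le s$ the isomorphism $I^{[s]}/I^{[s]}I^n \cong (R/I^n)^{\oplus d}$ holds because the syzygies of the regular sequence $a_1^s,\dots,a_d^s$ are Koszul, with entries in $I^s \subseteq I^n$; and for $n \ge (d-1)s$ one has $I^{[s]}I^n = I^{n+s}$. For the latter, though, your appeal to ``$r(I)=d-1$'' is off (a parameter ideal is its own minimal reduction, so $r(I)=0$; what you actually need is $r_{I^{[s]}}(I^s)\le d-1$), and the cleanest argument is the paper's: every monomial $x_1^{i_1}\cdots x_d^{i_d}$ of degree $n+s\ge ds$ must have some $i_j\ge s$.

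The genuine gap is in the middle range $s+1\le n\le (d-1)s-1$, which is the substance of the theorem. Your proposed induction on $d$ by passing to $\bar R=R/(a_d^s)$ does not close as stated: $\bar R$ is Cohen--Macaulay of dimension $d-1$, but the image $\bar I$ of $I$ is generated by $a_1,\dots,a_d$ with $a_d$ a nonzero nilpotent, so $\bar I$ is \emph{not} a parameter ideal of $\bar R$ and the inductive hypothesis cannot be applied to $\ell(\overline{I^{[s]}}/\overline{I^{[s]}}\,\bar I^{\,n})$. Equivalently, the colon identities you defer to the end, of the shape $(I^{[s]}I^n : a_d^s)=\cdots$, are not routine consequences of regularity of the sequence; controlling intersections such as $I^{[s]}\cap I^{m}$ for $m$ in this transitional range is exactly the hard part, and your sketch leaves it unproved. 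The paper handles this by working in the associated graded ring $G(I)$, which for a parameter ideal in a CM ring is a polynomial ring over $R/I$: the initial forms $(x_i^s)^*$ form a $G(I)$-regular sequence, the degree-$n$ strand of their Koszul complex gives an exact sequence of the modules $I^{m}/I^{m+1}$, and the Valabrega--Valla criterion supplies $I^{[s]}\cap I^{n+s+1}=I^{[s]}I^{n+1}$; together these yield the first-difference recursion $F(s,n+1)-F(s,n)=\sum_{i=1}^{d-1}(-1)^{i+1}\binom{d}{i}\bigl(H(n-(i-1)s+1)-H(n-(i-1)s)\bigr)$, which telescopes from $F(s,s)=d\,H(s)$. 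To salvage your route you would need either to prove a statement general enough to survive the passage to $\bar R$, or to establish the colon/intersection identities directly---which in effect reproduces the Valabrega--Valla step. Your endpoint consistency checks and the final remark on independence of the generating set are fine.
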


\begin{proof} 
	{\bf Case 1}: Let $1 \leq n \leq s.$ Let $I=(x_1,\ldots,x_d).$ Consider the following epimorphism
	\begin{align*}
	\left(\frac{R}{I^n}\right)^{\oplus d} &\overset{\phi}\longrightarrow \frac{I^{[s]}}{I^{[s]}I^n} \rightarrow 0 \\
	(a_1+I^n,\ldots,a_d+I^n) &\mapsto (a_1x_1^s + \cdots + a_d x_d^s) + I^{[s]}I^n.
	\end{align*}
	Let $(a_1+I^n,\ldots,a_d+I^n) \in \text{ker}(\phi).$ Then $a_1x_1^s+\cdots+a_dx_d^s \in I^{[s]}I^n$ implies that 
	\[a_1x_1^s+\cdots+a_dx_d^s = b_1x_1^s+\cdots+b_dx_d^s, \] 
	where $b_1,\ldots,b_d \in I^n.$ Since $x_1^s,\ldots,x_d^s$ is an $R$-regular sequence, it follows that $(a_i - b_i) \in I^{[s]} \subseteq I^n$ for all $i =1,\ldots,d.$ Thus $a_1,\ldots,a_d \in I^n.$ Therefore, $\text{ker}(\phi)=0$ and $I^{[s]}/I^{[s]}I^n \simeq (R/I^n)^{\oplus d}.$ Hence $F(s,n) = d \ H(n)$, for all $1 \leq n \leq s.$
	
	{\bf Case 2}: Let $s+1 \leq n \leq (d-1)s-1.$ Let $x_i^*$ denote the image of $x_i$ in $I/I^2$, for all $i=1,\ldots,d.$ As $G(I)$ is Cohen-Macaulay, $(\textbf{x}^*)^{[s]} = (x_1^s)^*,\ldots,(x_d^s)^* \in I^s/I^{s+1}$ is  a $G(I)$-regular sequence. Hence  the following exact sequence is
	obtained from the Koszul complex of  $G(I)$ with respect to $(\textbf{x}^*)^{[s]} .$
	{\small \begin{align*}
	0 \rightarrow G(I)(-(d-1)s) \rightarrow G(I)(-(d-2)s)^{\binom{d}{1}} \rightarrow \cdots 
	\rightarrow G(I)^{\binom{d}{1}} \rightarrow G(I)(s) \rightarrow H_0((\textbf{x}^*)^{[s]};G(I)(s)) \rightarrow 0.
	\end{align*}}
	As we have an exact sequence of graded $G(I)$-modules, taking the $n^{th}$-graded component of each of these modules gives us the following exact sequence
	\begin{align*}
	0 \rightarrow \frac{I^{n-(d-1)s}}{I^{n-(d-1)s+1}} \rightarrow \left(\frac{I^{n-(d-2)s}}{I^{n-(d-2)s+1}}\right)^{\binom{d}{1}} \rightarrow \cdots \rightarrow \left(\frac{I^n}{I^{n+1}}\right)^{\binom{d}{1}} \rightarrow \frac{I^{n+s}}{I^{n+s+1}} \rightarrow \frac{I^{n+s}}{I^{[s]}I^{n}+I^{n+s+1}} \rightarrow 0.
	\end{align*}
	For $s \leq n \leq (d-1)s-1,$ it follows that
	\begin{align*}
	\ell_R\left(\frac{I^{n+s}}{I^{[s]}I^{n}+I^{n+s+1}}\right) = \ell_R\left(\frac{I^{n+s}}{I^{n+s+1}}\right) + \sum_{i=1}^{d-1} (-1)^i \binom{d}{i} \ell_R\left(\frac{I^{n-(i-1)s}}{I^{n-(i-1)s+1}}\right). 
	\end{align*}	
	{By canceling $\ell_R(R/I^{n+s})$ on both the sides, we get}
	\begin{align} \label{eq11}
	\ell_R\left(\frac{R}{I^{[s]}I^{n}+I^{n+s+1}}\right) = \ell_R\left(\frac{R}{I^{n+s+1}}\right) + \sum_{i=1}^{d-1} (-1)^i \binom{d}{i} \ell_R\left(\frac{I^{n-(i-1)s}}{I^{n-(i-1)s+1}}\right).
	\end{align}	
	We can also write
	\begin{align}
	\ell_R\left(\frac{R}{I^{[s]}I^{n}+I^{n+s+1}}\right) 
	&= \ell_R\left(\frac{R}{I^{[s]}}\right) + \ell_R\left(\frac{I^{[s]}}{I^{[s]}I^{n}}\right)  - \ell_R\left(\frac{I^{[s]}I^{n}+I^{n+s+1}}{I^{[s]}I^{n}}\right) \nonumber\\
	&= \ell_R\left(\frac{R}{I^{[s]}}\right) + \ell_R\left(\frac{I^{[s]}}{I^{[s]}I^{n}}\right)  - \ell_R\left(\frac{I^{n+s+1}}{I^{[s]}I^{n+1}}\right) \label{eq12}
	\end{align}
	since using Valabrega-Valla criterion (\cite[Corollary 2.7]{VV}),
	\begin{align*}
	\frac{I^{[s]}I^{n}+I^{n+s+1}}{I^{[s]}I^{n}} \simeq \frac{I^{n+s+1}}{(I^{[s]} \cap I^{n+s}) \cap I^{n+s+1}} \simeq \frac{I^{n+s+1}}{I^{[s]} \cap I^{n+s+1}} \simeq \frac{I^{n+s+1}}{I^{[s]}I^{n+1}}.
	\end{align*}
	Combining equations \eqref{eq11} and \eqref{eq12}, we get
	\begin{align*}
	\ell_R\left(\frac{I^{[s]}}{I^{[s]}I^{n}}\right) - \ell_R\left(\frac{I^{[s]}}{I^{[s]}I^{n+1}}\right)
	= \sum_{i=1}^{d-1} (-1)^i \binom{d}{i} \ell_R\left(\frac{I^{n-(i-1)s}}{I^{n-(i-1)s+1}}\right) 
	\end{align*}
	and hence for all $s \leq n \leq (d-1)s-1,$
	\begin{align*}
	F(s,n+1) - F(s,n) = \sum_{i=1}^{d-1} (-1)^{i+1} \binom{d}{i} \big[ H(n-(i-1)s+1) - H(n-(i-1)s) \big].
	\end{align*}
	Adding the above equality from $s$ to $t-1$, for any $t$ such that $s+1 \leq t \leq (d-1)s-1$, we get
	\begin{align*}
	F(s,t) - F(s,s) 
	&= \sum_{i=1}^{d-1} (-1)^{i+1} \binom{d}{i} [ H(t-(i-1)s) - H((2-i)s) ] \\
	&= d[H(t) - H(s) ] + \sum_{i=2}^{d-1} (-1)^{i+1} \binom{d}{i} H(t-(i-1)s).
	\end{align*}
	Since $F(s,s) = d \ H(s),$ we get the result for $F(s,n)$, $s+1 \leq n \leq (d-1)s-1.$
	
	{\bf Case 3}: Let $n \geq s(d-1).$ 
%
{	
	We prove that $I^{[s]}I^n = I^{n+s}$. Let $I=(x_1,...,x_d)$, then $I^{n+s}$ is generated by the monomials of the form $x_1^{i_1} \cdots x_d^{i_d}$ where $\sum i_j = n+s$. Since the monomial $x_1^{s-1} \cdots x_d^{s-1}$ has degree $ds-d$ and $n+s\geq ds$, it follows that $I^{n+s}\subseteq I^{[s]}I^n\subseteq I^{n+s}$. Thus for all $n \geq (d-1)s$, 
	\[ F(s,n) = \ell_R\left(\frac{I^{[s]}}{I^{n+s}}\right) = \ell_R\left(\frac{R}{I^{n+s}}\right) - \ell_R\left(\frac{R}{I^{[s]}}\right) = H(n+s) - s^d e_0(I). \]
}
\end{proof}
{
\begin{Remarks}{\rm 
	\begin{enumerate}[{\rm (1)}]
		\item The arguments in proof of Theorem \ref{F(n)}, case 3 actually prove that $I^{[s]}I^n = I^{n+s}$, for all $n \geq d(s-1)-s+1.$ In other words, $F(s,n) = H(n+s)-s^de_0(I)$ for all $n \geq s(d-1)-d+1.$ This proves that
		\begin{align} \label{rexpressF(n)}
		F(s,n) = 
		\begin{cases}
		d \ H(n) &  \text{if } 1 \leq n \leq s, \\
		\sum_{i=1}^{d-1} (-1)^{i+1} \binom{d}{i} H(n-(i-1)s) & \text{ if }  s+1 \leq n \leq s(d-1)-d, \\
		H(n+s) - s^d e_0(I) & \text{ if } n \geq s(d-1)-d+1.
		\end{cases}
		\end{align}
		The following claim proves that the above observation does not contradict \eqref{expressF(n)}.\\
		{\bf Claim}: For all $s(d-1)-d+1 \leq n \leq s(d-1),$
		\[ \sum_{i=1}^{d-1} (-1)^{i+1} \binom{d}{i} H(n-(i-1)s) = H(n+s) - s^d e_0(I). \]
		Write $n = s(d-1)-d+1+j,$ where $j=0,1,\ldots,d-1.$ Then
		\begin{align*}
		\sum_{i=1}^{d-1} (-1)^{i+1} \binom{d}{i} H(n-(i-1)s)
		&= \sum_{i=1}^{d-1} (-1)^{i+1} \binom{d}{i} \binom{s(d-1)-d+1+j-(i-1)s+d-1}{d}e_0(I) \\
		&= \sum_{i=1}^{d-1} (-1)^{i+1} \binom{d}{i} \binom{s(d-i)+j}{d}e_0(I).
		\end{align*}
		Using change of variables, we get
		\begin{align*}
		\sum_{i=1}^{d-1} (-1)^{i+1} \binom{d}{i} H(n-(i-1)s)
		&= \sum_{i=1}^{d-1} (-1)^{d-i+1}  \binom{d}{d-i} \binom{is+j}{d} e_0(I) \\
		&= \sum_{i=0}^{d} (-1)^{d-i+1}  \binom{d}{i} \binom{is+j}{d} e_0(I) + \binom{ds+j}{d} e_0(I).
		\end{align*}
		Since $H(n+s) = \binom{ds+j}{d}e_0(I)$, from \cite[3.150]{combi}, we get
		\begin{align*}
		\sum_{i=1}^{d-1} (-1)^{i+1} \binom{d}{i} H(n-(i-1)s)
		&= H(n+s) - s^d e_0(I).
		\end{align*}
		\item Observe that $F(s,n)$ is the Hilbert-Samuel function of the $R$-module $I^{[s]}$ with respect to $I.$ Let $n(I)$ denote the postulation number of $I$ in $R$ and $P_I(n)$ denote the Hilbert-Samuel polynomial of $I$ in $R.$ Using \eqref{rexpressF(n)}, it follows that $F(s,n)$ is a polynomial, given by $P_I(n+s)-s^de_0(I),$ for $n>\max\{ n(I)+s+1, (d-1)s-d+1 \}.$
	\end{enumerate}
}\end{Remarks}
}

We recall the definition of Stirling numbers of the first kind and Stirling numbers of the second kind.
{
\begin{Definition} [{\cite[page 26]{stanley97}}] \label{stirling1}
	{\rm Let $\mathfrak{S}_n$ denote the set of permutations on $n$ elements. Define $c(n,k)$ to be the number of permutations $w \in \mathfrak{S}_n$ with exactly $k$ cycles. The number ${\bf s}(n,k) :=(-1)^{n-k} c(n,k)$ is known as the Stirling number of the first kind. It is easy to check that \\
	(a) ${\bf s}(n,n) = 1,$ \hskip10pt  and  \hskip10pt 
	(b) ${\bf s}(n,n-1) = -\binom{n}{2}.$ \\
	Using \cite[1.3.7 Proposition]{stanley97}, it follows that 
	\begin{align} \label{s1}
	\sum_{k=0}^{n} {\bf s}(n,k) x^k = n! \binom{x}{n}.
	\end{align}
}\end{Definition}
}
\begin{Definition}  [{\cite[page 73]{stanley97}}] \label{stirling2}
	{\rm The Stirling number of the second kind, denoted by ${\bf S}(n,k)$, is equal to the number of partitions of the set $[n] =\{1,\ldots,n\}$ into $k$ blocks.
	\begin{align*}
	{\bf S}(n,k) = \frac{1}{k!} \sum_{i=0}^{k} (-1)^{k-i} \binom{k}{i} i^n.
	\end{align*}
	It is easy to check that \\
	(a) ${\bf S}(d+1,d) = \binom{d+1}{2},$ \hskip40pt
	(b) ${\bf S}(d,d) = 1,$ \hskip35pt  and \hskip40pt
	(c) ${\bf S}(j,d) = 0,$ for all $j<d.$
}\end{Definition}

{
Using Stirling numbers we prove the following combinatorial identity.
\begin{Lemma}  \label{combi}
	Let $s,d \in \mathbb{N}.$ Then
	\begin{align*}
	\sum_{i=0}^{d} (-1)^{d-i} \binom{d}{i} \binom{is}{d+1} = \frac{ds^d(s-1)}{2}.
	\end{align*}
\end{Lemma}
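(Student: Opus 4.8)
The plan is to expand the binomial coefficient $\binom{is}{d+1}$ as a polynomial in $i$ via Stirling numbers of the first kind, interchange the two summations, and recognize the resulting inner sum as a multiple of a Stirling number of the second kind. Concretely, applying \eqref{s1} with $x=is$ and $n=d+1$ gives
\[ \binom{is}{d+1} = \frac{1}{(d+1)!}\sum_{k=0}^{d+1} {\bf s}(d+1,k)\, s^k\, i^k, \]
so that
\[ \sum_{i=0}^{d}(-1)^{d-i}\binom{d}{i}\binom{is}{d+1} = \frac{1}{(d+1)!}\sum_{k=0}^{d+1}{\bf s}(d+1,k)\,s^k\left(\sum_{i=0}^{d}(-1)^{d-i}\binom{d}{i}i^k\right). \]

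The next step is to invoke the closed form in Definition \ref{stirling2}: reading it as ${\bf S}(k,d) = \tfrac{1}{d!}\sum_{i=0}^{d}(-1)^{d-i}\binom{d}{i}i^k$, we obtain $\sum_{i=0}^{d}(-1)^{d-i}\binom{d}{i}i^k = d!\,{\bf S}(k,d)$ for every $k\ge 0$. Substituting this into the displayed identity collapses it to $\frac{1}{d+1}\sum_{k=0}^{d+1}{\bf s}(d+1,k)\,{\bf S}(k,d)\,s^k$, and by part (c) of Definition \ref{stirling2} we have ${\bf S}(k,d)=0$ for all $k<d$, so only the terms $k=d$ and $k=d+1$ remain.

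Finally, I would insert the special values from Definitions \ref{stirling1} and \ref{stirling2}, namely ${\bf s}(d+1,d)=-\binom{d+1}{2}$, ${\bf s}(d+1,d+1)=1$, ${\bf S}(d,d)=1$, and ${\bf S}(d+1,d)=\binom{d+1}{2}$, to get
\[ \sum_{i=0}^{d}(-1)^{d-i}\binom{d}{i}\binom{is}{d+1} = \frac{1}{d+1}\left(-\binom{d+1}{2}s^d + \binom{d+1}{2}s^{d+1}\right) = \frac{d}{2}\,s^d(s-1), \]
where the last equality uses $\binom{d+1}{2}/(d+1)=d/2$. This is exactly the claimed formula. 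Everything after the two polynomial expansions is mechanical bookkeeping; the only place needing a moment's care is justifying $\sum_{i=0}^{d}(-1)^{d-i}\binom{d}{i}i^k=d!\,{\bf S}(k,d)$ uniformly in $k$ — including the degenerate range $k<d$, where both sides vanish — but this follows at once from the stated closed form for ${\bf S}(k,d)$, so I do not expect a real obstacle here.
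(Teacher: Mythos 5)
Your proposal is correct and follows exactly the same route as the paper's proof: expand $\binom{is}{d+1}$ via Stirling numbers of the first kind using \eqref{s1}, interchange the sums, recognize the inner sum as $d!\,{\bf S}(k,d)$, and use the vanishing ${\bf S}(k,d)=0$ for $k<d$ together with the special values of ${\bf s}(d+1,d)$, ${\bf s}(d+1,d+1)$, ${\bf S}(d,d)$, ${\bf S}(d+1,d)$ to reduce to the two surviving terms. The final arithmetic also matches the paper's.
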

\begin{proof}
	Using \eqref{s1}, write
	\[ \binom{is}{d+1} = \frac{1}{(d+1)!} \sum_{j=0}^{d+1} {\bf s}(d+1,j) (is)^j. \]
	Therefore,
	\begin{align*}
	\sum_{i=0}^{d} (-1)^{d-i} \binom{d}{i} \binom{is}{d+1}
	&= \sum_{i=0}^{d} (-1)^{d-i} \binom{d}{i} \frac{1}{(d+1)!} \sum_{j=0}^{d+1} {\bf s}(d+1,j) (is)^j  \\
	&= \frac{1}{(d+1)!} \sum_{j=0}^{d+1} {\bf s}(d+1,j) \ s^j \ \sum_{i=0}^{d} (-1)^{d-i} \binom{d}{i} i^j.  
	\end{align*}
	Using definition and properties of Stirling numbers (see Definition \ref{stirling1} and \ref{stirling2}), we get
	\begin{align*}
	\sum_{i=0}^{d} (-1)^{d-i} \binom{d}{i} \binom{is}{d+1}
	&= \frac{d!}{(d+1)!} \sum_{j=0}^{d+1} {\bf s}(d+1,j) \ s^j \ {\bf S}(j,d)  \\
	&= \frac{1}{(d+1)} \left[ {\bf s}(d+1,d) \ s^d \ {\bf S}(d,d) + {\bf s}(d+1,d+1) \ s^{d+1} \ {\bf S}(d+1,d) \right]  \\
	&= \frac{1}{(d+1)} \left[ -s^d \binom{d+1}{2} + s^{d+1} \binom{d+1}{2} \right]  \\
	&= \frac{ds^d(s-1)}{2}.
	\end{align*}
\end{proof}
}

\begin{Lemma}  \label{binomial} 
	For $d\geq 2,$ put  $\beta_1 = (d-2)/2$ and $\beta_2 = (d-1)(3d-10)/24$. Then for any  $s \in \mathbb{N},$ 
	\begin{align*}
	\binom{s+d-1}{d+1} = \frac{s^{d+1}}{(d+1)!} + \frac{s^d \beta_1}{d!} + \frac{s^{d-1} \beta_2}{(d-1)!} + \cdots,
	\end{align*}
\end{Lemma}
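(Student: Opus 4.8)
The plan is to expand $\binom{s+d-1}{d+1}$ directly as a polynomial in $s$ and read off its top three coefficients. Writing the falling product explicitly,
\[ \binom{s+d-1}{d+1} = \frac{(s+d-1)(s+d-2)\cdots(s-1)}{(d+1)!} = \frac{1}{(d+1)!}\prod_{j=-1}^{d-1}(s+j), \]
so that the numerator is a monic polynomial of degree $d+1$ in $s$ whose coefficient of $s^{\,d+1-k}$ is the $k$-th elementary symmetric function $e_k$ of the multiset $\{-1,0,1,\ldots,d-1\}$. Hence the coefficient of $s^{d+1}$ is $e_0=1$, giving the leading term $s^{d+1}/(d+1)!$; the coefficient of $s^d$ is $e_1 = \sum_{j=-1}^{d-1} j = \tfrac{d(d-1)}{2}-1 = \tfrac{(d-2)(d+1)}{2}$, and dividing by $(d+1)!$ yields $\tfrac{d-2}{2\cdot d!} = \beta_1/d!$; and the coefficient of $s^{d-1}$ is $e_2/(d+1)!$.

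To evaluate $e_2$ I will use the Newton--Girard identity $e_2 = \tfrac12\big((\sum_{j} j)^2 - \sum_{j} j^2\big)$ together with $\sum_{j=-1}^{d-1} j^2 = 1 + \tfrac{(d-1)d(2d-1)}{6}$. Substituting the two sums and clearing denominators gives
\[ e_2 = \frac{3(d-2)^2(d+1)^2 - 2(2d^3-3d^2+d+6)}{24} = \frac{3d^4 - 10d^3 - 3d^2 + 10d}{24}. \]
Since $d=1$ is a root of $3d^3-10d^2-3d+10$, this cubic factors as $(d-1)(d+1)(3d-10)$, so $e_2 = \tfrac{d(d-1)(d+1)(3d-10)}{24}$; dividing by $(d+1)! = (d+1)\,d!$ and simplifying gives $\tfrac{(d-1)(3d-10)}{24\,(d-1)!} = \beta_2/(d-1)!$, which is the asserted $s^{d-1}$-coefficient. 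The remaining lower-order terms are absorbed into the ``$\cdots$'' and are not needed elsewhere in the paper.

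There is no genuine obstacle here: the argument is a finite elementary-symmetric-function computation, and the only points requiring care are the index/sign convention---because we write $\prod(s+j)$ rather than $\prod(s-j)$, the coefficients are $+e_k$ rather than $(-1)^k e_k$---and the routine polynomial factorization at the end, which can be double-checked against small values such as $d=2,3$. One could alternatively run the whole computation through the Stirling numbers of the first kind as in \eqref{s1}, applied after the shift $s\mapsto s+d-1$; but the elementary-symmetric-function route keeps all three leading coefficients in view simultaneously and is shorter.
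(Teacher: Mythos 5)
Your proof is correct, and it takes a genuinely different route from the paper's. You expand the product $\binom{s+d-1}{d+1}=\frac{1}{(d+1)!}\prod_{j=-1}^{d-1}(s+j)$ directly, read off the coefficient of $s^{d+1-k}$ as the elementary symmetric function $e_k$ of $\{-1,0,1,\ldots,d-1\}$, and compute $e_1$ and $e_2$ by hand (the latter via Newton--Girard); your values $e_1=\tfrac{(d-2)(d+1)}{2}$ and $e_2=\tfrac{d(d-1)(d+1)(3d-10)}{24}$ check out, and dividing by $(d+1)!$ gives exactly $\beta_1/d!$ and $\beta_2/(d-1)!$. The paper instead writes $\binom{s+d-1}{d+1}=(-1)^{d+1}\binom{-(s-1)}{d+1}$, applies the Stirling-number identity \eqref{s1} at $x=1-s$, expands $(1-s)^k$ by the binomial theorem, and extracts the three top coefficients after an index shift; the $s^{d-1}$ coefficient there requires the separate evaluation $c(d+1,d-1)=\sum_{j=2}^{d}j\binom{j}{2}=\tfrac{d(d+1)(3d^2-d-2)}{24}$. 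Your approach is more elementary and self-contained, keeps all three leading coefficients visible at once, and avoids the double sum and sign bookkeeping; the paper's approach has the advantage of reusing the Stirling-number apparatus already set up for Lemma \ref{combi}. Both yield identical coefficients, as one can confirm for $d=2,3$.
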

{
\begin{proof}
	Using \cite[page iv]{combi} and \eqref{s1}, it follows that
	\begin{align*}
	\binom{s+d-1}{d+1} 
	= (-1)^{d+1} \binom{-(s-1)}{d+1}  
	&= \frac{(-1)^{d+1}}{(d+1)!} \sum_{k=0}^{d+1} {\bf s}(d+1,k) (1-s)^k  \\
	&= \frac{(-1)^{d+1}}{(d+1)!} \sum_{k=0}^{d+1} {\bf s}(d+1,k) \sum_{j=0}^{k} \binom{k}{j} (-1)^j s^j.
	\end{align*}
	By index shifting, we get
	\begin{align} \label{b1}
	\binom{s+d-1}{d+1} 
	&= \frac{(-1)^{d+1}}{(d+1)!} \sum_{j=0}^{d+1} \sum_{k=0}^{j} (-1)^{d+1-j} {\bf s}(d+1,d+1-k) \binom{d+1-k}{j-k} s^{d+1-j}.
	\end{align}
	Coefficient of $s^{d+1}$ in \eqref{b1} is
	\begin{align}
	\frac{1}{(d+1)!} \ {\bf s}(d+1,d+1) = \frac{1}{(d+1)!}.
	\end{align}
	Coefficient of $s^d$ in \eqref{b1} is
	\begin{align*}
	\frac{(-1)^{d+1}}{(d+1)!} \left[ (-1)^d {\bf s}(d+1,d+1) (d+1) + (-1)^d {\bf s}(d+1,d) \right]
	&= \frac{-1}{(d+1)!} \left[ d + 1 - \binom{d+1}{2} \right] \\
	&= \frac{d-2}{2d!}. 
	\end{align*}
	Coefficient of $s^{d-1}$ in \eqref{b1} is
	\begin{align*}
	&\frac{1}{(d+1)!} \left[ {\bf s}(d+1,d+1) \binom{d+1}{2} + {\bf s}(d+1,d) d + {\bf s}(d+1,d-1) \right]  \\
	&= \frac{1}{(d+1)!} \left[ \binom{d+1}{2} - d\binom{d+1}{2} + c(d+1,d-1) \right].
	\end{align*}
	Using \cite[1.3.6 Lemma]{stanley97}, it is easy to check that 
	\[ c(d+1,d-1) = \sum_{j=2}^d j \binom{j}{2} = \frac{d(d+1)(3d^2-d-2)}{24}. \]
	Substituting, we get the coefficient of $s^{d-1}$ is
	\begin{align*}
	\frac{1}{(d+1)!} \left[ \binom{d+1}{2} - d\binom{d+1}{2} + \frac{d(d+1)(3d^2-d-2)}{24} \right] = \frac{(d-1)(3d-10)}{24 (d-1)!}.
	\end{align*}
\end{proof}
}

\begin{Theorem} \label{dimdCMsop}
	Let $R$ be a $d$-dimensional Cohen-Macaulay local ring and $d \geq 2.$ Let $I$ be a parameter ideal of $R$ and $\mathcal I=(I,It)\R(I).$ 
	{Put $c(d) = (d/2)+ d/(d+1)!.$} Let $s \in \mathbb{N}.$  \\
	{\rm(1)} Let $s<d.$ Write $d=k_1s+k_2$  where $k_2 \in \{0,1,\ldots,s-1\}.$ If $k_2=0$, then
	\[ \ell_R\left(\frac{\mathcal{R}(I)}{\mathcal I^{[s]}}\right) = 
	e_0(I) \left[ (d-k_1+1)s^{d+1} + d \binom{s+d-1}{d+1} - \sum_{i=0}^{d-1} \left[ (-1)^{i} \binom{d}{i} \binom{(d-i-k_1+1)s+d-1}{d+1} \right] \right]. \]
	If $k_2 \neq 0,$ then
	\[ \ell_R\left(\frac{\mathcal{R}(I)}{\mathcal I^{[s]}}\right) = 
	e_0(I) \left[ (d-k_1)s^{d+1} + d \binom{s+d-1}{d+1} - \sum_{i=0}^{d-1} \left[ (-1)^{i} \binom{d}{i} \binom{(d-i-k_1)s+d-1}{d+1} \right] \right]. \]
	{\rm(2)} Let $s \geq d.$ Then
	{
	\[ \ell_R\left(\frac{\mathcal{R}(I)}{\mathcal I^{[s]}}\right) =  
	e_0(I) \left[ \frac{ds^{d+1}}{2} - \frac{s^d(d-2)}{2} + d \binom{s+d-1}{d+1} \right]. \]  }
	In other words, for $s$ large, 
	\[ \ell_R\left(\frac{\R(I)}{\mathcal I^{[s]}}\right) = c(d)e_0(I) s^{d+1} + e_0(I)\left(\frac{d-2}{2}\right)\left(\frac{1}{(d-1)!} - 1 \right)s^d + e_0(I) \frac{d(d-1)(3d-10)}{24(d-1)!} s^{d-1} + \cdots , \]
	implying that the generalized Hilbert-Kunz multiplicity $e_{HK}((I,It)\R(I)) = c(d) \ e_0(I).$
\end{Theorem}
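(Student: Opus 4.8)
The plan is to compute $\ell_R(\mathcal{R}(I)/\mathcal{I}^{[s]})$ one graded component at a time, in the spirit of the proof of Theorem \ref{dim1}. Since $\mathcal{I}^{[s]} = (I^{[s]},\, I^{[s]}t^s)\mathcal{R}(I)$, its degree-$n$ component is $I^{[s]}I^n$ for $0 \leq n \leq s-1$ and $I^{[s]}I^{n-s}$ for $n \geq s$ (using $I^n \subseteq I^{n-s}$), so
\[ \ell_R\left(\frac{\mathcal{R}(I)}{\mathcal{I}^{[s]}}\right) = \sum_{n=0}^{s-1}\ell_R\left(\frac{I^n}{I^{[s]}I^n}\right) + \sum_{n\geq s}\ell_R\left(\frac{I^n}{I^{[s]}I^{n-s}}\right). \]
Each summand rewrites as $\ell_R(I^n/I^{[s]}I^m) = \ell_R(R/I^{[s]}I^m) - H(n) = s^d e_0(I) + F(s,m) - H(n)$, where $\ell_R(R/I^{[s]}) = s^d e_0(I)$ because $I^{[s]}$ is a parameter ideal of the Cohen-Macaulay ring $R$ with $e_0(I^{[s]}) = s^d e_0(I)$, and $F$ is the function of Theorem \ref{F(n)}. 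By the Remarks following Theorem \ref{F(n)}, $I^{[s]}I^m = I^{m+s}$ for $m \geq (d-1)s-d+1$, so the summand of the second sum vanishes for $n \gg 0$ and that sum is finite.

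For the first sum only Case~1 of Theorem \ref{F(n)} is needed, $F(s,n) = d\,H(n)$ for $0 \leq n \leq s-1$, giving $\ell_R(I^n/I^{[s]}I^n) = s^d e_0(I) + (d-1)H(n)$; summing and applying the hockey-stick identity $\sum_{n=0}^{s-1}\binom{n+d-1}{d} = \binom{s+d-1}{d+1}$ yields $s^{d+1}e_0(I) + (d-1)e_0(I)\binom{s+d-1}{d+1}$. For the second sum I would reindex by $m = n-s$ and break its finite range according to the three cases of Theorem \ref{F(n)}: $F(s,m) = d\,H(m)$ on $[0,s]$, $F(s,m) = \sum_{i=1}^{d-1}(-1)^{i+1}\binom{d}{i}H(m-(i-1)s)$ on the middle interval, and the summand zero past it. Interchanging the order of summation on the middle block and applying the hockey-stick identity repeatedly collapses everything to expressions of the form $\sum_{i=0}^{d}(-1)^{d-i}\binom{d}{i}\binom{is}{d+1}$, which Lemma \ref{combi} evaluates to $\frac{d\,s^d(s-1)}{2}$. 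Assembling the pieces, with $H(\cdot) = e_0(I)\binom{\cdot+d-1}{d}$ throughout, gives the closed form of part~(2); this is the clean regime precisely when $s \geq d$, so that the three intervals occur in the expected order.

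Part~(1), the case $s < d$, runs along the same lines, but now the translated intervals $[0,s],[s,2s],\dots$ do not reach $(d-1)s$ in a single clean block; how many of them fit — and hence where the endpoints of the telescoped binomial sums land — is exactly what the division $d = k_1 s + k_2$ with $0 \leq k_2 < s$ records, the two sub-cases $k_2 = 0$ and $k_2 \neq 0$ being a way of expressing a single formula in terms of $\lceil d/s\rceil$ rather than $k_1 = \lfloor d/s\rfloor$. Keeping the binomial arguments aligned so that the hockey-stick identity and Lemma \ref{combi} still apply, and tracking these endpoints carefully, produces the two displayed formulas. I expect this combinatorial bookkeeping — matching the piecewise definition of $F(s,\cdot)$ against the finite summation range without an off-by-one error — to be the main obstacle; the individual manipulations are routine.

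Finally, for the asymptotic assertion: when $s \geq d$ the formula in part~(2) is genuinely a polynomial in $s$ of degree $d+1$, and substituting the expansion of $\binom{s+d-1}{d+1}$ from Lemma \ref{binomial} and collecting the top three coefficients gives $c(d)e_0(I)$ in degree $d+1$ (using $c(d) = d/2 + d/(d+1)!$), $e_0(I)\frac{d-2}{2}\left(\frac{1}{(d-1)!} - 1\right)$ in degree $d$, and $e_0(I)\frac{d(d-1)(3d-10)}{24(d-1)!}$ in degree $d-1$. Since $\dim\mathcal{R}(I) = d+1$ and the leading coefficient is $c(d)e_0(I)$, the generalized Hilbert-Kunz multiplicity of $(I,It)\mathcal{R}(I)$ equals $c(d)e_0(I)$, i.e. the Eto--Yoshida bound of \cite{etoYoshida} is attained in this case.
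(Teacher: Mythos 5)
Your proposal follows essentially the same route as the paper: decompose $\mathcal{R}(I)/\mathcal{I}^{[s]}$ into graded components, substitute the piecewise formula for $F(s,n)$ from Theorem \ref{F(n)}, telescope with the hockey-stick identity, and finish with Lemma \ref{combi} and Lemma \ref{binomial}; the only real difference is that you justify the truncation of the second sum via the Remarks after Theorem \ref{F(n)}, whereas the paper uses Marley's reduction-number formula (Theorem \ref{Marley}) to locate the exact endpoint $(d-k_1-j+1)s$, which is precisely where the $k_1,k_2$ of part (1) enter. The bookkeeping you defer in part (1) is exactly what the paper carries out, and your outlines of part (2) and of the asymptotic expansion match the paper's computation.
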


\begin{proof}
	Consider
	\begin{align*}
	\mathcal I^{[s]} 
	= (I^{[s]}, I^{[s]}t^s) 
	= \left(\bigoplus_{n=0}^{s-1} I^{[s]}I^nt^n \right) + \left(\bigoplus_{n \geq s}  I^{[s]}I^{n-s} t^n \right). 
	\end{align*}
Since $G(I)$ is Cohen-Macaulay and the $a$-invariant $a_d(G(I))=-d<0$, using Theorem \ref{Marley} it follows that $r(I^s)=d-1$ if $s \geq d.$ Let $s<d.$ Write $d=k_1s+k_2$, where $k_2 \in \{0,1,\ldots,s-1\}.$ Then  using Theorem \ref{Marley} it follows that 
	\begin{align*}
		r(I^s) = \lfloor -k_1 - \frac{k_2}{s} \rfloor + d =
		\begin{cases}
			d-k_1 & \text{ if } k_2=0, \\
			d-k_1-1 & \text{ if } k_2 \neq 0. 
		\end{cases}
	\end{align*}
	{\bf Case 1}: Let $s<d.$ Write $d=k_1s+k_2$, where $k_2 \in \{0,1,\ldots,s-1\}.$ Then as observed above, $r(I^s)=d-k_1-j,$ $j \in \{0,1\}.$ As $I^{[s]}$ is a minimal reduction of $I^s$, we get, $I^{[s]}I^{(d-k_1-j)s} = I^{(d-k_1-j+1)s}.$ In other words, $I^{[s]} I^{n-s} = I^n$, for all $n \geq (d-k_1-j+1)s.$ Therefore,
	\begin{align*}
	\mathcal I^{[s]} 
	= \left(\bigoplus_{n=0}^{s-1} I^{[s]}I^nt^n \right) + \left(\bigoplus_{n=s}^{(d-k_1-j+1)s-1} I^{[s]}I^{n-s} t^n \right) + \left(\bigoplus_{n \geq (d-k_1-j+1)s} I^nt^n \right). 
	\end{align*}
	Consider
	\begin{align*}
	\ell_R\left(\frac{\mathcal{R}(I)}{\mathcal I^{[s]}}\right)
	&= \sum_{n=0}^{s-1} \ell_R\left(\frac{I^n}{I^{[s]}I^n}\right) 
	+ \sum_{n=s}^{(d-k_1-j+1)s-1} \ell_R\left(\frac{I^n}{I^{[s]}I^{n-s}} \right) \\
	&= \sum_{n=0}^{s-1} \ell_R\left(\frac{R}{I^{[s]}I^n}\right) 
	+ \sum_{n=s}^{(d-k_1-j+1)s-1} \ell_R\left(\frac{R}{I^{[s]}I^{n-s}} \right) 
	- \sum_{n=0}^{(d-k_1-j+1)s-1} \ell_R\left(\frac{R}{I^n} \right).
	\end{align*}
	{Write $\ell_R \left( \frac{R}{I^{[s]}I^n} \right) = \ell_R \left( \frac{R}{I^{[s]}} \right) + \ell_R \left( \frac{I^{[s]}}{I^{[s]}I^n} \right).$ Since $R$ is Cohen-Macaulay and $I$ is a parameter ideal, $\ell_R \left( \frac{R}{I^n} \right) = e_0(I) \binom{n+d-1}{d}$ for all $n \geq 1$, and hence}
	\begin{align*}
	\ell_R\left(\frac{\mathcal{R}(I)}{\mathcal I^{[s]}}\right)
	&{= (d-k_1-j+1)s \cdot \ell_R\left(\frac{R}{I^{[s]}}\right) 
	+ \sum_{n=0}^{s-1} \ell_R\left(\frac{I^{[s]}}{I^{[s]}I^n}\right) 
	+ \sum_{n=s}^{(d-k_1-j+1)s-1} \ell_R\left(\frac{I^{[s]}}{I^{[s]}I^{n-s}}\right)} \\
	&\hskip7cm {- \sum_{n=1}^{(d-k_1-j+1)s-1} e_0(I)\binom{n+d-1}{d} }\\
	&= (d-k_1-j+1)s \cdot \ell_R\left(\frac{R}{I^{[s]}}\right) 
	+ 2 \sum_{n=0}^{s-1} \ell_R\left(\frac{I^{[s]}}{I^{[s]}I^n}\right) 
	+ \sum_{n=s}^{(d-k_1-j)s-1} \ell_R\left(\frac{I^{[s]}}{I^{[s]}I^n}\right) \\
	&\hskip7cm - \sum_{n=1}^{(d-k_1-j+1)s-1} e_0(I)\binom{n+d-1}{d}.
	\end{align*}
	As $s<d,$ we get $k_1\geq 1.$ Thus, $d-k_1-j \leq d-1.$ Using Theorem \ref{F(n)}, we get
	{\small \begin{align}
	&\ell_R\left(\frac{\mathcal{R}(I)}{\mathcal I^{[s]}}\right) \nonumber\\
	&= (d-k_1-j+1)s^{d+1} e_0(I) + 2d \ e_0(I) \sum_{n=1}^{s-1} \binom{n+d-1}{d} +  d \ e_0(I) \sum_{n=s}^{(d-k_1-j)s-1} \binom{n+d-1}{d} \nonumber\\
	&\hskip2cm + \sum_{n=s}^{(d-k_1-j)s-1} \left[ \sum_{i=2}^{d-1} (-1)^{i+1} \binom{d}{i} e_0(I) \binom{n-(i-1)s+d-1}{d} \right] - \sum_{n=1}^{(d-k_1-j+1)s-1} e_0(I) \binom{n+d-1}{d} \nonumber\\
	&= (d-k_1-j+1)s^{d+1}e_0(I) + 2d \ e_0(I) \binom{s+d-1}{d+1} + d \ e_0(I) \left[ \binom{(d-k_1-j)s+d-1}{d+1} - \binom{s+d-1}{d+1} \right] \nonumber\\
	&\hskip2.5cm - \sum_{i=2}^{d-1} \left[ (-1)^{i} \binom{d}{i} e_0(I) \binom{(d-j-k_1-i+1)s+d-1}{d+1} \right]
	- e_0(I) \binom{(d-k_1-j+1)s+d-1}{d+1} \nonumber\\
	&= (d-k_1-j+1)s^{d+1}e_0(I) + d \ e_0(I) \binom{s+d-1}{d+1} - \sum_{i=0}^{d-1} \left[ (-1)^{i} \binom{d}{i} e_0(I) \binom{(d-j-k_1-i+1)s+d-1}{d+1} \right]. \nonumber
	\end{align}}
	
	{\bf Case 2}: Let $s \geq d.$
	Then $r(I^s)=d-1.$ As $I^{[s]}$ is a minimal reduction of $I^s$, we get, $I^{[s]}I^{(d-1)s} = I^{ds}.$ In other words, $I^{[s]} I^{n-s} = I^n$, for all $n \geq ds.$ Therefore,
	\begin{align*}
	\mathcal I^{[s]} 
	= \left(\bigoplus_{n=0}^{s-1} I^{[s]}I^nt^n \right) + \left(\bigoplus_{n=s}^{ds-1} I^{[s]}I^{n-s} t^n \right) + \left(\bigoplus_{n \geq ds} I^nt^n \right). 
	\end{align*}
	Consider
	\begin{align*}
	\ell_R\left(\frac{\mathcal{R}(I)}{\mathcal I^{[s]}}\right)
	&= \sum_{n=0}^{s-1} \ell_R\left(\frac{I^n}{I^{[s]}I^n}\right) 
	+ \sum_{n=s}^{ds-1} \ell_R\left(\frac{I^n}{I^{[s]}I^{n-s}} \right) \\
	&= \sum_{n=0}^{s-1} \ell_R\left(\frac{R}{I^{[s]}I^n}\right) 
	+ \sum_{n=s}^{ds-1} \ell_R\left(\frac{R}{I^{[s]}I^{n-s}} \right) 
	- \sum_{n=0}^{ds-1} \ell_R\left(\frac{R}{I^n} \right).
	\end{align*}
	{Write $\ell_R \left( \frac{R}{I^{[s]}I^n} \right) = \ell_R \left( \frac{R}{I^{[s]}} \right) + \ell_R \left( \frac{I^{[s]}}{I^{[s]}I^n} \right).$ Since $\ell_R \left( \frac{R}{I^n} \right) = e_0(I) \binom{n+d-1}{d}$, we have}
	\begin{align*}
	\ell_R\left(\frac{\mathcal{R}(I)}{\mathcal I^{[s]}}\right)
	&{= ds \ \ell_R\left(\frac{R}{I^{[s]}}\right)  
	+ \sum_{n=0}^{s-1} \ell_R\left(\frac{I^{[s]}}{I^{[s]}I^n}\right) 
	+ \sum_{n=s}^{ds-1} \ell_R\left(\frac{I^{[s]}}{I^{[s]}I^{n-s}} \right) 
	- \sum_{n=1}^{ds-1} e_0(I)\binom{n+d-1}{d} } \\
	&= ds \ \ell_R\left(\frac{R}{I^{[s]}}\right) 
	+ 2 \sum_{n=0}^{s-1} \ell_R\left(\frac{I^{[s]}}{I^{[s]}I^n}\right) 
	+ \sum_{n=s}^{(d-1)s-1} \ell_R\left(\frac{I^{[s]}}{I^{[s]}I^n}\right) 
	- \sum_{n=1}^{ds-1} e_0(I)\binom{n+d-1}{d}  \\
	&= ds^{d+1}e_0(I) + 2 \sum_{n=1}^{s-1} \ell_R\left(\frac{I^{[s]}}{I^{[s]}I^n}\right) 
	{+ \sum_{n=s}^{(d-1)s-d} \ell_R\left(\frac{I^{[s]}}{I^{[s]}I^n}\right) 
	+ \sum_{n=(d-1)s-d+1}^{(d-1)s-1} \ell_R\left(\frac{I^{[s]}}{I^{[s]}I^n}\right)} \\
	&\hskip9cm - \sum_{n=1}^{ds-1} e_0(I)\binom{n+d-1}{d}.
	\end{align*}
	From \eqref{rexpressF(n)}, it follows that
	{
	\begin{align*} 
	\ell_R\left(\frac{\mathcal{R}(I)}{\mathcal I^{[s]}}\right)
	&= ds^{d+1} e_0(I) + 2d \ e_0(I) \sum_{n=1}^{s-1} \binom{n+d-1}{d} +  d \ e_0(I) \sum_{n=s}^{(d-1)s-d} \binom{n+d-1}{d}  \\
	&+ \sum_{n=s}^{(d-1)s-d} \left[ \sum_{i=2}^{d-1} (-1)^{i+1} \binom{d}{i} e_0(I) \binom{n-(i-1)s+d-1}{d} \right]  \\
	&+\sum_{n=s(d-1)-d+1}^{(d-1)s-1} \left[ \binom{n+s+d-1}{d} e_0(I) - s^de_0(I) \right]
	- \sum_{n=1}^{ds-1} e_0(I) \binom{n+d-1}{d}  \\
	&= ds^{d+1}e_0(I) + 2d \ e_0(I) \binom{s+d-1}{d+1} + d \ e_0(I) \left[ \binom{(d-1)s}{d+1} - \binom{s+d-1}{d+1} \right]   \\
	&- \sum_{i=2}^{d-1} \left[ (-1)^{i} \binom{d}{i} e_0(I) \binom{(d-i)s}{d+1} \right]
	+ e_0(I)\binom{ds+d-1}{d+1} - e_0(I)\binom{ds}{d+1} - s^d(d-1)e_0(I)   \\
	&\hskip9cm- e_0(I) \binom{ds+d-1}{d+1}  \\
	&= ds^{d+1}e_0(I) + d \ e_0(I) \binom{s+d-1}{d+1} - \sum_{i=0}^{d-1} \left[ (-1)^{i} \binom{d}{i} e_0(I) \binom{(d-i)s}{d+1} \right] - s^d(d-1)e_0(I)  \\
	&= e_0(I) \left[ds^{d+1} - s^d(d-1) + d \binom{s+d-1}{d+1} - \sum_{i=1}^{d} \left[ (-1)^{d-i} \binom{d}{i} \binom{is}{d+1} \right] \right].  
	\end{align*}}

	{
	Using Lemma \ref{combi}, it follows that
	\begin{align} \label{HKfunction}
	\ell_R\left(\frac{\mathcal{R}(I)}{\mathcal I^{[s]}}\right)
	&= e_0(I) \left[ds^{d+1} - s^d(d-1) + d \binom{s+d-1}{d+1} - \frac{ds^d(s-1)}{2} \right]  \nonumber\\
	&= e_0(I) \left[ \frac{ds^{d+1}}{2} - \frac{s^d(d-2)}{2} + d \binom{s+d-1}{d+1} \right].
	\end{align}
	}
	This implies that the Hilbert-Kunz function is a polynomial for $s \geq d.$ We now use Lemma \ref{binomial} to find the coefficients of $s^{d+1}$, $s^d$ and $s^{d-1}$ in the expression \eqref{HKfunction}. The coefficient of $s^{d+1}$ is
	\begin{align*}
	e_0(I) \left[ \frac{d}{2} + \frac{d}{(d+1)!} \right] 
	= c(d).
	\end{align*}
	Coefficient of $s^d$ is
	\begin{align*}
	e_0(I) \left[ \frac{-(d-2)}{2} + \frac{d(d-2)}{2d!} \right] 
	&= e_0(I) \left( \frac{d-2}{2} \right) \left[ -1 + \frac{1}{(d-1)!} \right]
	\end{align*}
	and coefficient of $s^{d-1}$ is 
	\begin{align*}
	e_0(I) \frac{d(d-1)(3d-10)}{24(d-1)!}.
	\end{align*}
\end{proof}

{In \cite{conca}, Conca remarked that the generalized Hilbert-Kunz function is dependent on the choice of the set of generators of the ideal. But if $R$ is a Cohen-Macaulay local ring of dimension $d \geq 2$ and $I$ is a parameter ideal, then using Theorem \ref{dimdCMsop} it follows that the length function $\ell(\R(I)/(I,It)^{[s]})$ is independent of the choice of generators of $I.$}

\begin{Corollary}
	Let $(R,\mm)$ be a $d$-dimensional regular local ring with $d\geq 2.$ Then for $s \geq d$,
	\[ \ell_R\left(\frac{\R(\mm)}{(\mm,\mm t)^{[s]}}\right) = c(d) s^{d+1} + \left(\frac{d-2}{2}\right)\left(\frac{1}{(d-1)!} - 1 \right)s^d + \frac{d(d-1)(3d-10)}{24(d-1)!} s^{d-1} + \cdots . \]
\end{Corollary}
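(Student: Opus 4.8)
The plan is to obtain this as the immediate $e_0=1$ instance of Theorem~\ref{dimdCMsop}(2). First I would check that a $d$-dimensional regular local ring $(R,\mm)$ satisfies the hypotheses of that theorem: $R$ is Cohen-Macaulay, and $\mm$ is generated by a regular system of parameters $x_1,\dots,x_d$, which — $R$ being Cohen-Macaulay (indeed a domain) — forms an $R$-regular sequence. Hence $\mm$ is a parameter ideal, and $\R(\mm)$ together with $\mathcal I=(\mm,\mm t)\R(\mm)$ are precisely the objects to which Theorem~\ref{dimdCMsop} applies with $I=\mm$.

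Next I would pin down the one numerical input that specializes the general formula, namely the multiplicity $e_0(\mm)$. Since the associated graded ring $G(\mm)$ is a polynomial ring in $d$ variables over the field $R/\mm$, its Hilbert function is $\binom{n+d-1}{d-1}$, so $H_\mm(n)=\ell_R(R/\mm^n)=\binom{n+d-1}{d}$ for all $n\ge 0$; reading off the leading coefficient gives $e_0(\mm)=1$. (Equivalently, it is classical that a regular local ring has multiplicity one.)

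Finally I would substitute $I=\mm$ and $e_0(\mm)=1$ into the conclusion of Theorem~\ref{dimdCMsop}(2). For $s\ge d$ the theorem gives the exact closed form $\ell_R(\R(\mm)/(\mm,\mm t)^{[s]})=\tfrac{ds^{d+1}}{2}-\tfrac{s^d(d-2)}{2}+d\binom{s+d-1}{d+1}$, and its stated asymptotic expansion becomes
\[ \ell_R\!\left(\frac{\R(\mm)}{(\mm,\mm t)^{[s]}}\right)=c(d)\,s^{d+1}+\left(\tfrac{d-2}{2}\right)\!\left(\tfrac{1}{(d-1)!}-1\right)s^d+\tfrac{d(d-1)(3d-10)}{24(d-1)!}\,s^{d-1}+\cdots, \]
which is exactly the displayed identity; if one prefers, the coefficients can be recovered directly by expanding $\binom{s+d-1}{d+1}$ via Lemma~\ref{binomial} in the exact closed form. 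There is no real obstacle here: the entire content resides in Theorem~\ref{dimdCMsop}, and this corollary is merely its $e_0(I)=1$ case, the only things to verify being the trivialities that $\mm$ is a parameter ideal in $R$ and that its multiplicity equals one.
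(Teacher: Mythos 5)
Your proposal is correct and is exactly the paper's proof: the authors simply invoke Theorem~\ref{dimdCMsop} with $I=\mm$, the only (implicit) inputs being that $\mm$ is a parameter ideal of a regular local ring and that $e_0(\mm)=1$, both of which you verify. Nothing further is needed.
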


\begin{proof}
	Use $I=\mm$ in Theorem \ref{dimdCMsop}.
\end{proof}

\begin{Corollary} \label{I=m}
	Let $(R,\mm)$ be a Cohen-Macaulay local ring of dimension $d$ with positive prime characteristic $p>0.$ If there exists a parameter ideal $I$ of $R$ such that $I^*=\mm,$ then $$e_{HK}(\R(\mm))=c(d)e_0(\mm).$$
\end{Corollary}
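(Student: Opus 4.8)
The plan is to reduce the corollary to Theorem \ref{dimdCMsop} through a tight-closure comparison, and then to replace $e_0(I)$ by $e_0(\mm)$ by a squeeze against the Eto--Yoshida upper bound.

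First I would note that $(\mm,\mm t)\R(\mm)$ is exactly the unique maximal graded ideal of $\R(\mm)$: since $\mm t\,\R(\mm)=\R(\mm)_+$, we have $(\mm,\mm t)\R(\mm)=\mm\R(\mm)+\R(\mm)_+$, so by the convention for the Hilbert--Kunz multiplicity of the graded ring $\R(\mm)$ one has $e_{HK}(\R(\mm))=e_{HK}\big((\mm,\mm t)\R(\mm)\big)$. Because $I^*=\mm$ by hypothesis, the comparison \cite[Corollary 4.5]{etoYoshida} (already invoked at the start of this section) gives $e_{HK}\big((\mm,\mm t)\R(\mm)\big)=e_{HK}\big((I,It)\R(I)\big)$. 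As $R$ is Cohen--Macaulay and $I$ is a parameter ideal, Theorem \ref{dimdCMsop} applies (for $d\geq 2$) and yields $e_{HK}\big((I,It)\R(I)\big)=c(d)\,e_0(I)$. Chaining these, $e_{HK}(\R(\mm))=c(d)\,e_0(I)$.

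It then remains only to see $e_0(I)=e_0(\mm)$. On one hand $I\subseteq\mm$ forces $e_0(\mm)\le e_0(I)$. On the other hand, the Eto--Yoshida theorem from the introduction, applied to the ideal $\mm$, gives $e_{HK}(\R(\mm))\le c(d)\,e_0(\mm)$. Putting these together,
\[ c(d)\,e_0(I)=e_{HK}(\R(\mm))\le c(d)\,e_0(\mm)\le c(d)\,e_0(I), \]
so all terms coincide and $e_{HK}(\R(\mm))=c(d)\,e_0(\mm)$. (One can also extract $e_0(I)=e_0(\mm)$ from the equality case of that theorem, since $e_{HK}(\mm)=e_{HK}(I^*)=e_{HK}(I)=e_0(I)$ by \cite[Theorem 8.17(a)]{hochsterHunekeTight} together with the equality of the Hilbert--Kunz and Hilbert--Samuel multiplicities of a parameter ideal in a Cohen--Macaulay ring.) The case $d=1$ should be handled separately and is in fact unconditional: Theorem \ref{dim1IJ} with $J=I=\mm$ gives $e_{HK}(\R(\mm))=e_{HK}\big((\mm,\mm t)\R(\mm)\big)=e_0(\mm)=c(1)\,e_0(\mm)$.

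I do not anticipate a serious obstacle: the substantive computation lives inside Theorem \ref{dimdCMsop}. The only points requiring care are the identification of $(\mm,\mm t)\R(\mm)$ with the maximal graded ideal of $\R(\mm)$ --- so that $e_{HK}(\R(\mm))$ is indeed the quantity computed in Theorem \ref{dimdCMsop} --- and the dimension bookkeeping, since Theorem \ref{dimdCMsop} is stated for $d\geq 2$ while the corollary is phrased for arbitrary $d$.
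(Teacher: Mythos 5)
Your proposal is correct and follows essentially the same route as the paper: identify $e_{HK}(\R(\mm))$ with $e_{HK}((\mm,\mm t)\R(\mm))$, use the Eto--Yoshida tight-closure comparison to pass to $(I,It)\R(I)$, and apply Theorem \ref{dimdCMsop}. The only difference is the last step: the paper deduces $e_0(I)=e_0(\mm)$ directly from $\mm=I^*\subseteq\overline{I}\subseteq\mm$ (multiplicity being an invariant of the integral closure), whereas you squeeze against the Eto--Yoshida upper bound; both are valid, and your separate treatment of $d=1$ via Theorem \ref{dim1IJ} is a careful touch the paper leaves implicit.
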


\begin{proof}
	If there exists a parameter ideal $I$ of $R$ such that $I^*=\mm,$ then using the same arguments as in the proof of \cite[Corollary 4.5(2)]{etoYoshida}, it follows that 
	$e_{HK}((\mm,\mm t)\R(\mm)) = e_{HK}((I,It)\R(I)).$ Therefore, 
	\[e_{HK}(\R(\mm))=e_{HK}((\mm,\mm t)\R(\mm)) = e_{HK}((I,It)\R(I)) = c(d)e_0(I) = c(d)e_0(\mm).\] The latter equality holds as $I^*=\mm$ implies that $\overline{I}=\mm.$
\end{proof}

\begin{Corollary}
	Let $R$ be a $d$-dimensional Cohen-Macaulay Stanley-Reisner ring of a simplicial complex over an infinite field with prime characteristic $p>0.$ Let $\mm$ be a maximal homogeneous ideal of $R.$ Then $e_{HK}(\R(\mm))=c(d) \ f_{d-1}$, where $f_{d-1}$ is the number of facets in the simplicial complex.
\end{Corollary}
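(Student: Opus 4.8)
The plan is to reduce the statement to the already-established formula for the Hilbert-Kunz multiplicity of $(I,It)\R(I)$ for a parameter ideal $I$, combined with the comparison theorem relating the maximal ideal to a parameter ideal via tight closure. First I would recall that for a Cohen-Macaulay Stanley-Reisner ring $R$ of a simplicial complex $\Delta$ over an infinite field, there is a linear system of parameters $I=(x_1,\dots,x_d)$ (existing since the residue field is infinite), and by \cite[Theorem 6.1]{gmv} one has $\mm = I^*$. Since $I$ is a parameter ideal generated by a regular sequence (as $R$ is Cohen-Macaulay of dimension $d$), Corollary \ref{I=m} applies verbatim with this $I$: it gives
\[ e_{HK}(\R(\mm)) = e_{HK}((\mm,\mm t)\R(\mm)) = e_{HK}((I,It)\R(I)) = c(d)\,e_0(I) = c(d)\,e_0(\mm), \]
where the penultimate equality is the generalized (hence, in positive characteristic, the ordinary) Hilbert-Kunz multiplicity computed in Theorem \ref{dimdCMsop}, and the last equality uses $I^* = \mm \Rightarrow \overline{I} = \mm$, so $e_0(I) = e_0(\mm)$.

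The remaining step is purely combinatorial: identifying $e_0(\mm)$ with $f_{d-1}$, the number of facets of $\Delta$. For a Stanley-Reisner ring $k[\Delta]$ of a $(d-1)$-dimensional complex, the Hilbert series has the form $\sum_{i} f_{i-1} t^i/(1-t)^i$, and the multiplicity of the maximal homogeneous ideal $\mm$ equals the number of top-dimensional faces, i.e. $e_0(\mm) = f_{d-1}$. When $k[\Delta]$ is Cohen-Macaulay, $\Delta$ is pure, so every facet has dimension $d-1$ and this count is exactly the number of facets. I would cite this standard fact (e.g. from Bruns–Herzog or Stanley) rather than re-derive it.

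Putting these together yields $e_{HK}(\R(\mm)) = c(d)\,f_{d-1}$, which is the assertion. The proof is therefore essentially a one-line deduction: apply Corollary \ref{I=m} to the linear system of parameters $I$ with $I^*=\mm$ furnished by \cite{gmv}, then substitute the multiplicity formula for Stanley-Reisner rings.

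The main obstacle — though it is minor — is simply ensuring that the hypotheses of Corollary \ref{I=m} are genuinely met: one needs a \emph{parameter} ideal $I$ (not merely an ideal generated by a linear system of parameters in the graded sense) with $I^* = \mm$, and one must be slightly careful that $\R(\mm)$ here refers to the Rees algebra over the \emph{local} ring obtained by localizing at $\mm$ (or that the graded and local Hilbert-Kunz multiplicities agree, which they do since $\mm$ is the unique maximal homogeneous ideal and the relevant lengths are finite). Once that bookkeeping is in place, the equality $e_{HK}(\R(\mm)) = c(d)\,f_{d-1}$ follows immediately from Corollary \ref{I=m} and the classical formula $e_0(\mm) = f_{d-1}$ for Cohen-Macaulay Stanley-Reisner rings.
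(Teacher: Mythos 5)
Your proposal is correct and follows essentially the same route as the paper: invoke \cite[Theorem 6.1]{gmv} to get $\mm=I^*$ for a linear system of parameters $I$, apply Corollary \ref{I=m} to obtain $e_{HK}(\R(\mm))=c(d)\,e_0(\mm)$, and conclude with the standard identity $e_0(\mm)=f_{d-1}$ for Cohen-Macaulay Stanley-Reisner rings. The extra bookkeeping you mention (purity of $\Delta$, graded versus local multiplicities) is sound but not spelled out in the paper's own two-line proof.
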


\begin{proof}
	Let $I$ be an ideal generated by a linear system of parameters. It is proved in \cite[Theorem 6.1]{gmv} that $\mm=I^*.$ Therefore, using Corollary \ref{I=m} it follows that $e_{HK}(\R(\mm))=c(d) \ e(\mm).$ Since $e(\mm)=f_{d-1}$, we are done.
\end{proof}

\begin{Example}
	Let $k[[X_1,X_2,X_3]]$ be a power series ring in $3$ variables over a field $k.$ Let $I=(X_1^{n_1}, X_2^{n_2}, X_3^{n_3})$, where $n_1,n_2,n_3 \in \NN.$ Then
	\begin{align*} 
	\ell_R\left(\frac{\mathcal{R}(I)}{(I,It)^{[2]}}\right) 
	= e_0(I) \left[ 32 + 3 - \sum_{i=0}^{2} \left[ (-1)^{i} \binom{3}{i} \binom{6-2i}{4} \right] \right] = 23 \ e_0(I) = 23 \ n_1n_2n_3
	\end{align*}
	and for $s \geq 3,$
	\begin{align*}
	\ell_R\left(\frac{\mathcal{R}(I)}{\mathcal I^{[s]}}\right)
	= n_1n_2n_3 \left[ \frac{3s^{4}}{2} - \frac{s^3}{2} + 3 \binom{s+2}{4} \right] 
	= n_1n_2n_3 \left[ \frac{13}{8} s^4 - \frac{1}{4} s^3 - \frac{1}{8} s^2 - \frac{1}{4} s \right].
	\end{align*}
	In particular, for $s \geq 3$,
	\begin{align*} 
	\ell_R\left(\frac{\mathcal{R}(\mm)}{(\mm,\mm t)^{[s]}}\right) 
	= \frac{13}{8} s^4 - \frac{1}{4} s^3 - \frac{1}{8} s^2 - \frac{1}{4} s.
	\end{align*} 
\end{Example}

\begin{Example}
	Let $R=k[[X,Y,Z]]/(XY-Z^n)$ for some positive integer $n \geq 2$ and let $I$ be a parameter ideal of $R.$ Then for $s \geq 2,$
	\begin{align*}
	\ell_R\left(\frac{\mathcal{R}(I)}{\mathcal I^{[s]}}\right) 
	= e_0(I) \left[ s^{3} + 2 \binom{s+1}{3} \right] 
	= e_0(I) \left[ \frac{4}{3} s^3 - \frac{1}{3} s \right].
	\end{align*}
\end{Example}

\bibliographystyle{plain}
\bibliography{Notes}

\end{document}